\documentclass[smallextended]{svjour3}
\usepackage{amsmath,amsxtra,amssymb,latexsym, amscd}
\usepackage[utf8]{inputenc}
\usepackage[mathscr]{eucal}
\usepackage[english]{babel}
\usepackage[active]{srcltx}
\usepackage{color}

\setlength{\parindent}{12pt}
\setlength{\parskip}{1.0pt}

\setlength{\oddsidemargin}{-.5cm}
\setlength{\evensidemargin}{-.5cm}

\setlength{\textwidth}{6.5in}
\setlength{\textheight}{9.1in}
\setlength{\headheight}{0in}

\setlength{\topmargin}{-10pt}
\setlength{\headsep}{1.25cm}
\setlength{\footskip}{.5in}

\setlength{\baselineskip}{15pt}

\newcommand{\R}{\mathbb{R}}

\def\EES{{\accent"5E e}\kern-.5em\raise.8ex\hbox{\char'23 }}
\def\ow{o\kern-.42em\raise.82ex\hbox{
   \vrule width .12em height .0ex depth .075ex \kern-0.16em \char'56}\kern-.07em}
\def\OW{o\kern-.460em\raise1.36ex\hbox{
\vrule width .13em height .0ex depth .075ex \kern-0.16em
\char'56}\kern-.07em}
\def\DD{D\kern-.7em\raise0.4ex\hbox{\char '55}\kern.33em}

\title{H\"{o}lder-Type Global Error Bounds for Non-degenerate Polynomial Systems}

\author{S\~i Ti\d{\^e}p \DD INH$^\dagger$ \and 
		Huy Vui H\`A$^\dagger$\and 
		Ti\EES n S\OW n PH\d{A}M$^\ddagger$}

\institute{S\~i Ti\d{\^e}p \DD INH$^\dagger$ \at
              Institute of Mathematics, VAST, 18, Hoang Quoc Viet Road, Cau Giay District 10307, Hanoi, Vietnam \\
              \email{dstiep@math.ac.vn}
           \and
           Huy Vui H\`A$^\dagger$ \at
              Institute of Mathematics, VAST, 18, Hoang Quoc Viet Road, Cau Giay District 10307, Hanoi, Vietnam \\
              \email{hhvui@math.ac.vn}
           \and   
           Ti\EES n S\OW n PH\d{A}M$^\ddagger$ \at
			  Department of Mathematics, University of Dalat, 1 Phu Dong Thien Vuong, Dalat, Vietnam\\
			  \email{sonpt@dlu.edu.vn} 
}

\authorrunning{S. T. \DD INH, H. V. H\`A, T. S. PH\d{A}M}
\date{\today}

\begin{document}

\maketitle

\begin{abstract}
Let $F := (f_1, \ldots, f_p)  \colon {\Bbb R}^n \to {\Bbb R}^p$ be a polynomial map, and suppose that $S := \{x \in {\Bbb R}^n \ : \ f_i(x) \le 0, i = 1, \ldots, p\} \ne \emptyset.$ Let $d := \max_{i = 1, \ldots, p} \deg f_i$ and $\mathcal{H}(d, n, p) := d(6d - 3)^{n + p - 1}.$ Under the assumption that the map $F \colon {\Bbb R}^n \rightarrow {\Bbb R}^p$ is convenient and non-degenerate at infinity, we show that there exists a constant $c > 0$ such that
the following so-called {\em H\"{o}lder-type global error bound result} holds
$$c d(x,S) \le [f(x)]_+^{\frac{2}{\mathcal{H}(2d, n, p)}} + [f(x)]_+ \quad \textrm{ for all } \quad x \in \mathbb{R}^n,$$
where $d(x, S)$ denotes the Euclidean distance between $x$ and $S,$ $f(x) := \max_{i = 1, \ldots, p} f_i(x),$ and $[f(x)]_+ := \max \{f(x), 0 \}.$ The class of polynomial maps (with fixed Newton polyhedra), which are non-degenerate at infinity, is generic in the sense that it is an open and  dense semi-algebraic set. Therefore, H\"{o}lder-type global error bounds hold for a large class of polynomial maps, which can be recognized relatively easily from their combinatoric data. This follows up the result on a Frank-Wolfe type theorem for non-degenerate polynomial programs in \cite{Dinh2013}.

\keywords{Error bounds; Newton polyhedron; non-degenerate polynomial maps; Palais-Smale condition.}
\subclass{Primary 32B20; Secondary 14P, 49K40.}
\end{abstract}

\section{Introduction}

Let $f_1, \ldots, f_p  \colon {\Bbb R}^n \to {\Bbb R}$ be polynomial functions, and suppose that the set
$$S := \{x \in {\Bbb R}^n \ : \ f_1(x) \le 0, \ldots, f_p(x) \le 0\}$$
is nonempty. Let $f(x) := \max_{i = 1, \ldots, p} f_i(x).$ Then $S = \{x \in {\Bbb R}^n \ : \ f(x) \le 0\}.$
We are interested in the question of  whether one can use the residual (constraint violation) at a point $x \in {\Bbb R}^n$   to bound the distance from $x$ to the set $S.$ More precisely, we study if there exist some positive constants $c, \alpha,$ and $\beta$ such that
\begin{equation} \label{HolderInequality}
c d(x,S) \le [f(x)]_+^{\alpha} + [f(x)]_+^\beta \quad \textrm{ for all } \quad x \in \mathbb{R}^n,
\end{equation}
where $d(x, S)$ denotes the Euclidean distance between $x$ and the set $S$ and $[f(x)]_+ := \max \{f(x), 0 \}.$ We say that a {\em H\"older-type global error bound} holds for the set $S$ if the inequality (\ref{HolderInequality}) holds.

The study of error bounds has grown significantly and has found many important applications. In particular, it has been used sensitivity analysis for various problems of mathematical programming (for examples, the variational inequality, the linear and nonlinear complementarity problem, and the 0-1 integer feasibility problem). It has also been used as termination criteria for iterative decent algorithms. For a  comprehensive, state of the art survey of the extensive theory and rich  applications  of error  bounds, we refer the readers to the survey of Pang \cite{Pang1997} and the references cited therein.

The first error bound result is due to Hoffman \cite{Hoffman1952}. His  result deals with  the case where the polynomials $f_1, \ldots, f_p$ are affine and states that the inequality (\ref{HolderInequality})  holds with the exponents $\alpha = \beta  = 1.$
After the work of Hoffman, many people have devoted themselves to the study of global error bound; see, for example,
\cite{Auslender1988}, \cite{Klatte1998}, \cite{Klatte1999}, \cite{Lewis1998}, \cite{Mangasarian1985}, \cite{Ngai2005}, \cite{Robinson1975}.


In general, without the assumption of convexity, H\"{o}lder-type global error bounds are highly unlikely to hold.
When the constrained set $S$ defined by some affine functions and a {\em single quadratic} polynomial, Luo and Sturm \cite{Luo2000-1} showed that the H\"older-type global error bound (\ref{HolderInequality}) holds with the exponents $\alpha= \displaystyle\frac{1}{2}$ and $\beta = 1.$ In particular, a H\"{o}lder-type global error bound (with some {\em unknown} fractional exponents $\alpha$ and $\beta$) was obtained very recently by H\`a \cite{HaHV2013} for a nonlinear inequality defined by a {\em single} polynomial (i.e., in the case where $p = 1$),  which is convenient  and non-degenerate at infinity (see \cite{Kouchnirenko1976} and Section \ref{SectionPreliminary} for precise definitions).

This paper will deal mainly with a class of polynomial maps, which are defined by combinatorial data and are called {\em non-degenerate at infinity.} This notion is an adaptation in the real setting of the notion of non-degeneracy in the complex setting given by \cite{Khovanskii1978}, \cite{Kouchnirenko1976}. In both real and complex contexts, 
the class of polynomial maps (with fixed Newton polyhedra), which are non-degenerate at infinity, is generic in the sense that it is an open and dense set.

For any positive integers $d, n,$ and $p,$ let
$$\mathcal{H}(d, n, p) := d(6d - 3)^{n + p - 1}.$$
With the definitions in the next section, the main contribution of this  paper is the following H\"{o}lder-type global error bound with explicit exponents. 

\begin{theorem} {\rm (Compare \cite[Theorem C]{HaHV2013})} \label{HolderTypeTheorem}
Let $F := (f_1, \ldots, f_p) \colon {\Bbb R}^n \rightarrow {\Bbb R}^p, 1 \le p \le n,$ be a polynomial map. Suppose that $F$ is convenient and non-degenerate at infinity. Let $f(x) := \max_{i = 1, \ldots, p} f_i(x)$ and $S := \{x \in {\Bbb
R}^n \ : \ f(x) \le 0\} \ne \emptyset.$ Then there exists a constant $c>0$ such that
\begin{equation}\label{GlobalErrorBound}c d(x,S) \le [f(x)]_+^{\frac{2}{\mathcal{H}(2d, n, p)}} + [f(x)]_+ \quad \textrm{ for all } \quad x \in \mathbb{R}^n,\end{equation}
where $d:=\max_{i = 1, \ldots, p}\deg f_i.$
\end{theorem}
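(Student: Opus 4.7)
The plan is to adapt the strategy used by H\`a for the single polynomial case (Theorem C of \cite{HaHV2013}) to the max-function setting $f = \max_{i} f_i$. The architecture has three stages: (i) split the estimate into a ``near $S$'' regime where $0 < f(x) \le 1$ and a ``far'' regime where $f(x) > 1$; (ii) on the near regime, establish a Kurdyka--\L{}ojasiewicz-type gradient inequality with the explicit exponent coming from $\mathcal{H}(2d, n, p)$; and (iii) integrate along a Clarke-subgradient descent trajectory to convert this gradient inequality into a distance estimate.

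First I would dispose of the far regime $f(x) > 1$. Here I expect a bound of the form $c\, d(x, S) \le [f(x)]_+$ by using the Palais--Smale condition at infinity guaranteed by convenience and non-degeneracy; this is precisely where the Frank--Wolfe-type result of \cite{Dinh2013} enters, since it rules out sequences along which $f$ approaches a positive level while $\mathrm{dist}(0, \partial f)(1+\|x\|) \to 0$. Such an estimate is absorbed by the second summand on the right-hand side of \eqref{GlobalErrorBound}, so the substance of the theorem lies in the regime $0 < f(x) \le 1$, where one must prove $c\, d(x,S) \le f(x)^{2/\mathcal{H}(2d, n, p)}$.

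Next I would establish the non-smooth \L{}ojasiewicz gradient inequality driving the estimate. The natural object is the Clarke subdifferential $\partial f(x) = \mathrm{conv}\{\nabla f_i(x) : f_i(x) = f(x)\}$. Using convenience and non-degeneracy at infinity to control the asymptotic critical values of $f$, I would show that there exist $c_0 > 0$ and $t_0 \in (0, 1]$ such that
$$\mathrm{dist}(0, \partial f(x)) \;\ge\; c_0\, f(x)^{1 - 2/\mathcal{H}(2d, n, p)} \qquad \text{whenever } 0 < f(x) \le t_0.$$
The explicit exponent is produced by applying an effective D'Acunto--Kurdyka-type bound on \L{}ojasiewicz exponents to an auxiliary semi-algebraic set: one lifts $f$ to the graph $\{(x, y_1, \ldots, y_p) : y_i = f_i(x)\}$ and considers the tangency variety of the projection to the last coordinates, which sits in an ambient space of dimension $n + p$ and is cut out by polynomials of degree $\le 2d$ (the $2d$ arising from squaring to obtain smooth Lagrange-type relations). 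The resulting complexity bound $2d\,(6\cdot 2d - 3)^{n + p - 1} = \mathcal{H}(2d, n, p)$ is exactly the exponent appearing in \eqref{GlobalErrorBound}.

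Finally I would run a curve-selection and integration argument. From a point $x_0$ with $0 < f(x_0) \le t_0$, a piecewise $C^1$ subgradient descent curve $\gamma(s)$ reaches $\partial S$ in finite length; combining $\tfrac{d}{ds} f(\gamma(s)) \le -\mathrm{dist}(0, \partial f(\gamma(s)))^2$ with the gradient inequality above and integrating with respect to the level value gives $d(x_0, S) \le \mathrm{length}(\gamma) \le C\, f(x_0)^{2/\mathcal{H}(2d, n, p)}$. The main obstacle will be step (ii): simultaneously transferring the smooth D'Acunto--Kurdyka bound to the Clarke subdifferential of a maximum of polynomials, and verifying that the Newton-polyhedron non-degeneracy precludes escape of the subgradient flow to infinity at any positive level, which is what forces the exponent to stabilize with the specific dependence on $n + p - 1$ rather than on $n - 1$ as in the single-polynomial case of \cite{HaHV2013}.
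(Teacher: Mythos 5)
There is a genuine gap, and it sits exactly where you flag ``the main obstacle.'' The entire content of the hypotheses (convenience and non-degeneracy at infinity) is used in the paper to prove that the nonsmooth slope is uniformly bounded below at infinity: there exist $c>0$ and $R>0$ with ${\frak m}_f(x)\ge c$ for all $x$ with $f(x)>0$ and $\|x\|\ge R$ (Lemma~\ref{Lemma8}). This is established by the Curve Selection Lemma at infinity (Lemma~\ref{Lemma1}) applied to a carefully chosen semi-algebraic set encoding the active indices and the convex multipliers realizing ${\frak m}_f$, followed by a Newton-polyhedron expansion along the selected curve and a case analysis in which non-degeneracy of the face system $(f_{i,\Delta_i})$ produces the contradiction. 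Your proposal never proves this. Your far-regime argument invokes the Palais--Smale condition and the Frank--Wolfe theorem of \cite{Dinh2013}; but Palais--Smale only yields an error bound with \emph{unknown} exponents (Theorem~\ref{TheoremPalais-Smale-Global-Holderian}), and the Frank--Wolfe result concerns attainment of infima, not slope lower bounds, so neither delivers the linear exponent $\beta=1$ for $f(x)>1$. Likewise, your near-regime gradient inequality $\mathrm{dist}(0,\partial f(x))\ge c_0 f(x)^{1-2/\mathcal{H}(2d,n,p)}$ is asserted on the \emph{unbounded} set $\{0<f\le t_0\}$, whereas D'Acunto--Kurdyka-type estimates are inherently local/compact; globalizing them is equivalent to the uniform slope bound at infinity that is missing. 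Without Lemma~\ref{Lemma8}, the subgradient-descent trajectory of \cite{Bolte2007} may escape to infinity or have infinite length, and the integration step collapses.

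The architecture also differs from the paper's in a way worth noting. The paper splits \emph{spatially} ($\|x\|\ge R$ versus $\|x\|\le R$), not by the level of $f$: outside the ball the constant slope bound integrates to a linear estimate $c_1\|u(t_1)-u(t_2)\|\le f_+(u(t_1))-f_+(u(t_2))$ along the flow, and at the first time the trajectory enters the ball one invokes the bounded-region error bound of \cite[Corollary 3.8]{LMP2013} (Lemma~\ref{alpha}), which is where the explicit exponent $2/\mathcal{H}(2d,n,p)$ enters --- it is \emph{imported} as a distance-versus-residual estimate on a compact set, not derived via a tangency-variety/KL computation as you sketch. Your level-based split could in principle be repaired, but only after proving goodness at infinity, at which point you would essentially be reproducing the paper's Theorem~\ref{GoodnessTheorem} with Lemma~\ref{alpha} substituted for the classical \L ojasiewicz inequality. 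As written, the proposal identifies the correct toolkit but leaves the theorem's crux unproved.
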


Our result extends the result of \cite{HaHV2013}, which studies the case $p=1$. We also give estimations of the exponents $\alpha,\beta$ in (\ref{HolderInequality}), which has not been done in \cite{HaHV2013}.
In the case $p=1$, $F=f_1$, the existence of H\"older-type global error bounds follows easily from the existence of the following global \L ojasiewicz-type inequality
\begin{equation} \label{LojasiewiczInequality}
c d(x,Z) \le |F(x)|^{\alpha} + |F(x)|^\beta \quad \textrm{ for all } \quad x \in \mathbb{R}^n,
\end{equation} 
where $Z=F^{-1}(0)$. For a system of polynomials which is non-degenerate at infinity, the existence of (\ref{LojasiewiczInequality}) has been proved by \cite{Dinh2014}; however, the existence of H\"older-type global error bounds does not follow directly from (\ref{LojasiewiczInequality}). Our method is actually different from \cite{HaHV2013} at the crucial point that we use only the Curve Selection Lemma at infinity (see Lemma \ref{Lemma1}) as a tool.
The reader may find other global versions of this inequality in papers \cite{HaHV2010}, \cite{Hormander1958}, \cite{Ji1992}.


It is worth notice that error bound results with {\em explicit} exponents are indeed important for both theory and applications since they can be used, e.g., to establish explicit {\em convergence rates} of the proximal point algorithm as demonstrated in \cite{Borwein2014}, \cite{LM2012}, \cite{Li1995}. 

Note that we do not impose the condition of convexity on the polynomials $f_i,$ and their degrees can be arbitrary. 
Further, by genericity of the condition of non-degeneracy at infinity, the H\"older-type global error bounds hold for almost polynomial maps. 

The above H\"older-type global error bound result, together with the Frank-Wolfe type theorem in 
\cite{Dinh2013}, suggests that the class of polynomial maps, which are non-degenerate at infinity, may offer an appropriate domain on which the machinery of polynomial optimization works with full efficiency.

Let $f \colon {\Bbb R}^n \rightarrow {\Bbb R}$ be a continuous semi-algebraic function. Assume that $S := \{x \in {\Bbb R}^n \ : \ f(x) \le 0 \} \ne \emptyset.$ In order to obtain the main theorem, we have established some intermediate results which are of independent interest. Our principal tool is Curve Selection Lemma at infinity and the proof is closed to \cite{Dinh2013}, \cite{Dinh2014}. The sketch of the proof of the main result is as follows.

\begin{itemize}
\item First of all, we provide a necessary and sufficient condition for the existence of a H\"older-type global error bound for the set $S$ (Theorem~\ref{NSOAT}).

\item Secondly, we show that if $f$ satisfies the {\em Palais-Smale condition} at each non-negative value, then $f$ satisfies the above sufficient condition (Theorem~\ref{TheoremPalais-Smale-Global-Holderian}).

\item Thirdly, we show that if $f$ has a \textit{good asymptotic behavior} at infinity, then a H\"older-type global error bound (with the exponent $\beta = 1$) holds for the set $S$ (Theorem~\ref{GoodnessTheorem}).

\item Finally, we prove that if the map $(f_{1}, \ldots, f_{p}) \colon {\Bbb R}^n \rightarrow {\Bbb R}^p$  is convenient and non-degenerate at infinity, then $f(x) := \max_{i = 1, \ldots, p} f_i(x)$ has a good asymptotic behavior at infinity (Lemma~\ref{Lemma8}). Specifically, we determine explicitly the exponents $\alpha$ and $\beta$ in the H\"older-type global error bound (\ref{HolderInequality}) for such polynomial systems.
\end{itemize}

The paper is structured as follows. Section \ref{SectionPreliminary} presents some backgrounds in the field. A criterion for the existence of a H\"older-type global error bound is given in Section \ref{SectionErrorBound}. A relation between the Palais-Smale condition and the existence of H\"{o}lder-type global error bounds is given in Section \ref{SectionPalaisSmale}. In Section \ref{SectionGoodatinfinity}, we consider goodness at infinity.  The H\"older-type global error bound result (Theorem \ref{HolderTypeTheorem}) for convenient and non-degenerate polynomial maps will be proven in Section \ref{SectionHolder}. In Section \ref{Examples}, we give some illustrated examples for Theorem \ref{HolderTypeTheorem}. Section \ref{Applications} presents some applications.

\section{Preliminaries} \label{SectionPreliminary}

\subsection{Semi-algebraic geometry}
In this subsection, we recall some notions and results of semi-algebraic geometry, which can be found in \cite{Benedetti1991}, \cite{Bierstone1988}, \cite{Bochnak1998}, \cite{Dries1996}.

\begin{definition}{\rm
\begin{enumerate}
  \item[(i)] A subset of $\R^n$ is called {\em semi-algebraic} if it is a finite union of sets of the form
$$\{x \in\R^n \ : \ f_i(x) = 0, i = 1, \ldots, k; f_i(x) > 0, i = k + 1, \ldots, p\}$$
where all $f_{i}$ are polynomials.
 \item[(ii)]
Let $A \subset \Bbb{R}^n$ and $B \subset \Bbb{R}^p$ be semi-algebraic sets. A map $F \colon A \to B$ is said to be {\em semi-algebraic} if its graph
$$\{(x, y) \in A \times B \ : \ y = F(x)\}$$
is a semi-algebraic subset in $\Bbb{R}^n\times\Bbb{R}^p.$
\end{enumerate}
}\end{definition}

We list below some basic properties of semi-algebraic sets and functions.
\begin{enumerate}
\item[(i)] The class of semi-algebraic sets is closed under Boolean operators, taking Cartesian product, closure and interior.

\item[(ii)] A composition of semi-algebraic maps is a semi-algebraic map; the  image  and  preimage of a  semi-algebraic  set  under  a  semi-algebraic map are semi-algebraic sets;

\item[(iii)]  If $S$ is a semi-algebraic set, then the distance function $$d(\cdot, S) \colon {\Bbb R}^n \rightarrow {\Bbb R}, \quad x \mapsto d(x, S) := \inf \{\|x - a\| \ : \ a \in S \},$$
is also semi-algebraic.
\end{enumerate}




We give a version of the Curve Selection Lemma which will be used in the paper. For more details, see \cite{Milnor1968}, \cite{Nemethi1992} and see \cite{Dinh2013} for a complete proof.

\begin{lemma}[Curve Selection Lemma at infinity]\label{Lemma1}
Let $A\subset \mathbb{R}^n$ be a semi-algebraic set, and let $F := (f_1, \ldots,f_p) \colon  \mathbb{R}^n \to \mathbb{R}^p$ be a semi-algebraic map. Assume that there exists a sequence $x^k\in A$ such that $\lim_{k \to \infty} \| x^k  \| = \infty$ and $\lim_{k \to \infty} F(x^k)  = y \in(\overline{\mathbb{R}})^p,$ where $\overline{\mathbb{R}} := \mathbb{R} \cup \{\pm \infty\}.$ Then there exists a smooth semi-algebraic curve $\varphi \colon (0, \epsilon)\to \mathbb{R}^n$ such that $\varphi(t) \in A$ for all $t \in (0, \epsilon), \lim_{t \to 0} \|\varphi(t)\| = \infty,$ and $\lim_{t \to 0} F(\varphi(t)) = y.$
\end{lemma}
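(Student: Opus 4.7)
My plan is to reduce the Curve Selection Lemma at infinity to the classical Curve Selection Lemma at a finite point, by means of a semi-algebraic bounded embedding that simultaneously compactifies the domain $\mathbb{R}^n$ and accommodates the possibly infinite components of $y \in (\overline{\mathbb{R}})^p$.

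First I would construct a semi-algebraic map $\Phi \colon \mathbb{R}^n \to \mathbb{R}^{n+1+p}$ by
$$\Phi(x) := \left( \frac{x}{\sqrt{1+\|x\|^2}}, \ \frac{1}{\sqrt{1+\|x\|^2}}, \ \frac{f_1(x)}{\sqrt{1+f_1(x)^2}}, \ \ldots, \ \frac{f_p(x)}{\sqrt{1+f_p(x)^2}} \right),$$
and set $B := \Phi(A)$. The map $\Phi$ is smooth, semi-algebraic, and injective (the first $n+1$ coordinates recover $x$); its image is contained in the bounded set $S^n \times (-1,1)^p$, and $\Phi^{-1}$ on the image is smooth and semi-algebraic. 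The componentwise bounded transform $t \mapsto t/\sqrt{1+t^2}$ is a semi-algebraic homeomorphism $\mathbb{R} \to (-1,1)$ that extends continuously to $\overline{\mathbb{R}} \to [-1,1]$ by sending $\pm\infty$ to $\pm 1$, and this is the key device that makes the target compactification compatible with values of $y_i$ equal to $\pm\infty$.

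Next I would transfer the hypothesis to $B$. Since the sequence $\{\Phi(x^k)\}$ lies in the compact set $S^n \times [-1,1]^p$, after passing to a subsequence I may assume $\Phi(x^k) \to p_0 := (v, 0, \tilde{y})$, where $\|v\| = 1$, the vanishing of the second coordinate records the assumption $\|x^k\| \to \infty$, and $\tilde{y}_i := y_i/\sqrt{1+y_i^2}$ when $y_i \in \mathbb{R}$, $\tilde{y}_i := \pm 1$ when $y_i = \pm\infty$. In particular $p_0 \in \overline{B}$, so applying the classical Curve Selection Lemma (see Milnor) to the semi-algebraic set $B$ at the point $p_0$ yields a smooth semi-algebraic curve $\gamma \colon (0, \epsilon) \to B$ with $\lim_{t \to 0} \gamma(t) = p_0$. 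Setting $\varphi := \Phi^{-1} \circ \gamma \colon (0, \epsilon) \to A$, and reading off convergence $\gamma(t) \to p_0$ coordinate-wise: the second coordinate of $\gamma(t)$ tending to $0$ forces $\|\varphi(t)\| \to \infty$, while each component $f_i(\varphi(t))/\sqrt{1+f_i(\varphi(t))^2} \to \tilde{y}_i$ inverts back to $f_i(\varphi(t)) \to y_i$ in $\overline{\mathbb{R}}$.

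The main obstacle is precisely the simultaneous treatment of $\|x\| \to \infty$ in the source and possibly infinite components of $y$ in the target; the bounded transform above is specifically chosen to resolve both issues within the semi-algebraic category. A secondary subtlety concerns smoothness of $\varphi$: although $p_0 \notin \Phi(\mathbb{R}^n)$, the inverse $\Phi^{-1}$ is smooth on the image $\Phi(\mathbb{R}^n)$, so the composition $\Phi^{-1} \circ \gamma$ is smooth on the open interval $(0,\epsilon)$, which is all that is required.
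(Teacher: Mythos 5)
Your argument is correct and is essentially the standard proof that the paper defers to \cite{Dinh2013}: compactify the source by $x \mapsto \bigl(x/\sqrt{1+\|x\|^2},\, 1/\sqrt{1+\|x\|^2}\bigr)$ and the target by the bounded transform, extract a limit point $p_0$ of $\Phi(x^k)$ in the compact set, apply the classical Curve Selection Lemma to the semi-algebraic image $\Phi(A)$ at $p_0$, and pull the curve back. One small caveat: since $F$ is only assumed semi-algebraic (not smooth --- indeed the paper applies the lemma with the possibly discontinuous function ${\frak m}_f$), your claim that $\Phi$ is smooth is unwarranted, but this is harmless because the smoothness of $\varphi = \Phi^{-1}\circ\gamma$ only uses the first $n+1$ coordinates of $\gamma$ (after shrinking $\epsilon$ so that the semi-algebraic curve $\gamma$ is smooth), while the last $p$ coordinates are needed only for the convergence $F(\varphi(t)) \to y$.
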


The following result is useful in the next section (see, e.g., \cite{Dries1996}, \cite{Miller1994}).

\begin{lemma}[Growth Dichotomy Lemma] \label{GrowthDichotomyLemma}
Let $f \colon (0, \epsilon) \rightarrow {\Bbb R}$ be a semi-algebraic function with $f(t) \ne 0$ for all $t \in (0, \epsilon).$ Then
there exist constants $c \ne 0$ and $q \in {\Bbb Q}$ such that $f(t) = ct^q + o(t^q)$ as $t \to 0^+.$
\end{lemma}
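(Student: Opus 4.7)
The plan is to reduce the claim to the classical Newton--Puiseux theorem, exploiting the fact that any semi-algebraic function of one variable is, near $t=0$, a branch of a real algebraic curve.

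First, I would invoke cell decomposition (a consequence of Tarski--Seidenberg): after shrinking $\epsilon>0$ if necessary, the graph of $f$ over $(0,\epsilon)$ lies in a single cell of a cylindrical algebraic decomposition of $\mathbb{R}^2$. In particular there exists a nonzero irreducible polynomial $P(t,y)\in\mathbb{R}[t,y]$ with $P(t,f(t))=0$ for all $t\in(0,\epsilon)$. Combined with the monotonicity theorem for semi-algebraic functions, this shows that on a (possibly smaller) interval $(0,\epsilon')$ the function $f$ is continuous and monotone, hence picks out exactly one continuous branch of the algebraic curve $\{P=0\}$ over that interval.

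Next, I would apply the Newton--Puiseux theorem to $P$ at $t=0$: each analytic branch admits a convergent fractional-power expansion $y=\sum_{k\ge k_0} c_k \, t^{k/N}$ for some positive integer $N$ and integer $k_0$ (possibly negative, to accommodate the case where $f$ blows up at $0$). Since $f$ is a continuous branch of $\{P=0\}$ on $(0,\epsilon')$, it must coincide with one such Puiseux series there.

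Finally, the hypothesis $f(t)\ne 0$ on $(0,\epsilon)$ rules out the identically zero branch, so there is a smallest index $k_0$ with $c_{k_0}\ne 0$. Setting $c:=c_{k_0}\ne 0$ and $q:=k_0/N\in\mathbb{Q}$ yields $f(t)=ct^q+o(t^q)$ as $t\to 0^+$, which is the claim. The delicate ingredient is matching $f$ with a unique Puiseux branch; this is where the continuity supplied by the monotonicity theorem, and hence the distinctively real character of the setting, does the essential work, while the remaining steps are essentially bookkeeping.
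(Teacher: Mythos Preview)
Your argument is correct. Reducing to a polynomial relation $P(t,f(t))=0$ (via the fact that the one-dimensional semi-algebraic graph has a one-dimensional Zariski closure, or equivalently via cell decomposition), and then invoking Newton--Puiseux to extract a leading term $ct^{k_0/N}$, is the standard direct route to this lemma in the semi-algebraic case. The only step that could use one more sentence is the unbounded case: rather than appealing to a projective compactification, it is cleaner to say that the roots of $P(t,\cdot)$ lie in the algebraic closure $\bigcup_{N\ge 1}\mathbb{C}((t^{1/N}))$ of $\mathbb{C}((t))$, which already allows finitely many negative exponents, and that the real continuous branch $f$ must match one of the real Puiseux series among these.

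As for comparison with the paper: there is nothing to compare. The paper does not prove Lemma~\ref{GrowthDichotomyLemma}; it merely states it and refers the reader to \cite{Dries1996} and \cite{Miller1994}. Those references work in the broader o-minimal framework, where one first shows polynomial boundedness of definable one-variable functions and then, for the semi-algebraic structure specifically, obtains the rational exponent by precisely the kind of Puiseux argument you outline. So your proposal is not an alternative approach so much as an explicit, self-contained version of what the paper delegates to the literature.
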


\subsection{Newton polyhedra}

In many problems, the combinatorial informations of polynomial maps are important and can be found in their Newton polyhedra. In this subsection, we recall the definition of Newton polyhedra following Kouchnirenko and Khovanskii (see \cite{Kouchnirenko1976}, \cite{{Khovanskii1978}}).

Let us begin with some notations which will be used throughout this work. We consider a fixed coordinate system $x_1, \ldots, x_n \in {\Bbb R}^n.$ Let $J \subset \{1, \ldots, n\},$ then we define
$${\Bbb R}^J := \{x \in {\Bbb R}^n \ : \ x_j = 0, \textrm{ for all } j \not \in J\}.$$

We denote by ${\Bbb R}_{\ge 0}$ the set of non-negative real numbers. We also set ${\Bbb Z}_{\ge 0} := {\Bbb R}_{\ge 0} \cap {\Bbb Z}.$ If $\kappa = (\kappa_1, \ldots, \kappa_n) \in {\Bbb Z}_{\ge 0}^n,$ we denote by $x^\kappa$ the monomial
$x_1^{\kappa_1} \cdots x_n^{\kappa_n}$ and by $| \kappa|$ the sum $\kappa_1 + \cdots + \kappa_n.$

\begin{definition} {\rm
A subset $\Gamma \subset {\Bbb R}^n_{\ge 0}$ is said to be a {\em Newton polyhedron at infinity,}
if there exists some finite subset $A \subset {\Bbb Z}^n_{\ge 0}$  such that $\Gamma$ is equal to the convex hull in ${\Bbb R}^n$ of $A \cup \{0\}.$ Hence we say that $\Gamma$ is the Newton polyhedron at infinity determined by $A$ and we write $\Gamma = \Gamma(A).$ We say that a Newton polyhedron at infinity $\Gamma \subset {\Bbb R}^n_{\ge 0}$ is {\em convenient} if it intersects each coordinate axis in a point different from the origin, that is, if for any $i \in \{1, \ldots, n\}$  there exists some integer $m_j > 0$ such that $m_j e_j \in \Gamma,$ where $\{e_1, \ldots, e_n\}$ denotes the canonical basis in ${\Bbb R}^n.$
}\end{definition}

Given a Newton polyhedron at infinity $\Gamma \subset {\Bbb R}^n_{\ge 0}$ and a vector $q \in {\Bbb R}^n,$ we define
\begin{eqnarray*}
d(q, \Gamma) &:=& \min \{\langle q, \kappa \rangle \ : \ \kappa \in \Gamma\}, \\
\Delta(q, \Gamma) &:=& \{\kappa \in \Gamma \ : \ \langle q, \kappa \rangle = d(q, \Gamma) \}.
\end{eqnarray*}
We say that a subset $\Delta$ of $\Gamma$ is a {\em face} of $\Gamma$ if there exists a vector $q \in {\Bbb R}^n$ such that $\Delta = \Delta(q, \Gamma).$ The dimension of a face $\Delta$ is defined as the minimum of the dimensions of the affine subspaces containing $\Delta.$ The faces of $\Gamma$ of dimension $0$ are called the {\em vertices} of $\Gamma.$ We denote by $\Gamma_\infty$ the set of the faces of $\Gamma$ which do not contain the origin $0$ in ${\Bbb R}^n.$

Let $\Gamma_1, \ldots, \Gamma_p$ be a collection of $p$ Newton polyhedra at infinity in ${\Bbb R}^n_{\ge 0},$ for some $p \ge 1.$ The {\em Minkowski
sum} of $\Gamma_1, \ldots, \Gamma_p$ is defined as the set
$$\Gamma_1+ \cdots + \Gamma_p = \{\kappa^1 + \cdots + \kappa^p \ : \ \kappa^i \in \Gamma_i, \textrm{ for all } i = 1, \ldots, p\}.$$
By definition, $\Gamma_1+ \cdots + \Gamma_p$ is again a Newton polyhedron at infinity. Moreover, by applying the definitions given above, it is easy to check that
\begin{eqnarray*}
d(q, \Gamma_1 + \cdots + \Gamma_p) &=& d(q, \Gamma_1) + \cdots + d(q, \Gamma_p), \\
\Delta(q, \Gamma_1 + \cdots + \Gamma_p) &=& \Delta(q, \Gamma_1) + \cdots + \Delta(q, \Gamma_p),
\end{eqnarray*}
for all $q \in \mathbb{R}^n.$ As an application of these relations, we obtain the following lemma whose proof can be found in \cite{Dinh2013}.

\begin{lemma} \label{Lemma3} 
{\rm (i)} Assume that $\Gamma$ is a convenient Newton polyhedron at infinity. Let $\Delta$ be a face of $\Gamma$ and let $q=(q_1, \ldots, q_n)\in{\Bbb R}^n$ such that $\Delta = \Delta(q, \Gamma).$ Then the following conditions are equivalent:
\begin{enumerate}
\item[{\rm (i1)}] $\Delta\in\Gamma_\infty$;
\item[{\rm (i2)}] $d(q,\Gamma)<0$;
\item[{\rm (i3)}] $\min_{j = 1, \ldots, n} q_j < 0$.
\end{enumerate}
{\rm (ii)} Assume that $\Gamma_1,\ldots, \Gamma_p$ are some Newton polyhedra at infinity. Let $\Delta$ be a face of the Minkowski sum $\Gamma := \Gamma_1+ \cdots + \Gamma_p.$ Then the following statements hold:
\begin{enumerate}
\item[{\rm (ii1)}] There exists a unique collection of faces $\Delta_1, \ldots, \Delta_p$ of $\Gamma_1, \ldots, \Gamma_p,$ respectively, such that $$\Delta = \Delta_1 + \cdots + \Delta_p.$$
\item[{\rm (ii2)}] If $\Gamma_1,\ldots, \Gamma_p$ are convenient, then $\Gamma_\infty \subset \Gamma_{1, \infty}+ \cdots + \Gamma_{p, \infty}.$
\end{enumerate}
\end{lemma}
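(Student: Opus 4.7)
The plan is to handle the three equivalences of part (i) first using only the definitions together with the convenience hypothesis, then bootstrap part (ii1) from the additive formulas $d(q,\Gamma_1+\cdots+\Gamma_p)=\sum_i d(q,\Gamma_i)$ and $\Delta(q,\Gamma_1+\cdots+\Gamma_p)=\sum_i \Delta(q,\Gamma_i)$ recorded just before the lemma, and finally combine part (i) with part (ii1) to derive (ii2).

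For part (i), I would begin by noting that $0\in\Gamma$, so $d(q,\Gamma)\le\langle q,0\rangle=0$ for every $q$. Then (i1)$\Leftrightarrow$(i2) is immediate: $0\in\Delta(q,\Gamma)$ exactly when $d(q,\Gamma)=\langle q,0\rangle=0$, so $\Delta\in\Gamma_\infty$ iff $d(q,\Gamma)\ne 0$ iff $d(q,\Gamma)<0$. For (i2)$\Leftrightarrow$(i3), the convenience of $\Gamma$ gives $m_je_j\in\Gamma$ with $m_j>0$ for each $j$; if some $q_j<0$, then $d(q,\Gamma)\le m_jq_j<0$, while if every $q_j\ge 0$, then $\langle q,\kappa\rangle\ge 0$ on $\Gamma\subset\mathbb{R}^n_{\ge 0}$, forcing $d(q,\Gamma)=0$.

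For (ii1), existence is free: taking any $q$ with $\Delta=\Delta(q,\Gamma)$ and setting $\Delta_i:=\Delta(q,\Gamma_i)$, the additive formula yields $\Delta=\Delta_1+\cdots+\Delta_p$. For uniqueness, assume $\Delta=\tilde\Delta_1+\cdots+\tilde\Delta_p$ with $\tilde\Delta_i$ a face of $\Gamma_i$. Picking any point $\tilde\kappa_j\in\tilde\Delta_j$ and summing, one gets $\sum_j\langle q,\tilde\kappa_j\rangle=d(q,\Gamma)=\sum_j d(q,\Gamma_j)$; since each summand on the left is bounded below by the corresponding $d(q,\Gamma_j)$, equality must hold term by term, so $\tilde\Delta_i\subset\Delta(q,\Gamma_i)=\Delta_i$. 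To upgrade the inclusions $\tilde\Delta_i\subset\Delta_i$ to equalities I would invoke Minkowski cancellation for nonempty compact convex sets and peel off one factor at a time from $\sum_i\tilde\Delta_i=\sum_i\Delta_i$; this cancellation step is the piece I expect to be most delicate.

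For (ii2), first check that a Minkowski sum of convenient Newton polyhedra is itself convenient: if $m_{i,j}e_j\in\Gamma_i$ for each $i,j$, then $\bigl(\sum_i m_{i,j}\bigr)e_j\in\Gamma$. Given $\Delta\in\Gamma_\infty$, write $\Delta=\Delta(q,\Gamma)$; part (i) applied to the convenient $\Gamma$ forces $\min_j q_j<0$. Feeding this back into part (i) for each convenient $\Gamma_i$ gives $d(q,\Gamma_i)<0$, hence $0\notin\Delta(q,\Gamma_i)=\Delta_i$, so every $\Delta_i$ lies in $\Gamma_{i,\infty}$, and the unique decomposition from (ii1) then witnesses $\Delta\in\Gamma_{1,\infty}+\cdots+\Gamma_{p,\infty}$.
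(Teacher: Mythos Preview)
The paper does not actually prove this lemma; it simply records the statement as ``an application of these relations'' and refers the reader to \cite{Dinh2013} for the proof. So there is no in-paper argument against which to compare your proposal.

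That said, your argument is correct and is essentially the standard one. Part (i) is handled exactly as one would expect. For (ii1), your existence argument via $\Delta_i:=\Delta(q,\Gamma_i)$ and the additive formula is the canonical one, and your proof that any alternative decomposition satisfies $\tilde\Delta_i\subset\Delta_i$ is clean. The step you flag as delicate---upgrading $\tilde\Delta_i\subset\Delta_i$ with $\sum_i\tilde\Delta_i=\sum_i\Delta_i$ to termwise equality---does follow from Minkowski cancellation, but you can avoid the iterative peeling by passing to support functions: $h_{\tilde\Delta_i}\le h_{\Delta_i}$ pointwise and $\sum_i h_{\tilde\Delta_i}=\sum_i h_{\Delta_i}$, so each inequality is an equality and hence $\tilde\Delta_i=\Delta_i$. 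Your treatment of (ii2), first checking that the Minkowski sum of convenient polyhedra is convenient and then invoking (i) on both $\Gamma$ and each $\Gamma_i$, is correct and is the natural route.
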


Let $f \colon {\Bbb R}^n \to {\Bbb R}$ be a polynomial function. Suppose that $f$ is written as $f = \sum_{\kappa} a_\kappa x^\kappa.$ Then
the support of $f,$ denoted by $\mathrm{supp}(f),$ is defined as the set of those $\kappa  \in {\Bbb Z}_{\ge 0}^n$ such that $a_\kappa \ne 0.$ We denote the set $\Gamma(\mathrm{supp}(f))$ by $\Gamma(f).$ This set will be called the {\em Newton polyhedron at infinity} of $f.$ The polynomial $f$ is said to be convenient when $\Gamma(f)$ is convenient. If $f \equiv 0,$ then we set $\Gamma(f) = \emptyset.$ Note that, if $f$ is convenient, then for each nonempty subset $J$ of $\{1, \ldots, n\},$ we have $\Gamma(f) \cap {\Bbb R}^J = \Gamma(f|_{{\Bbb R}^J}).$ The {\em Newton boundary at infinity} of $f$, denoted by $\Gamma_{\infty}(f),$ is defined as the set of the faces of $\Gamma(f)$ which do not contain the origin $0$ in ${\Bbb R}^n.$

Let us fix a face $\Delta$ of $\Gamma_\infty(f).$  We define the {\em principal part of $f$ at infinity with respect to $\Delta,$} denoted by $f_\Delta,$ as the sum of those terms $a_\kappa x^\kappa$ such that $\kappa \in \Delta.$

\begin{remark}{\rm
By definition, for each face $\Delta$ of $\Gamma_\infty$ there exists a vector $q = (q_1, \ldots, q_n) \in {\Bbb R}^n$ with $\min_{j = 1, \ldots, n} q_j < 0$ such that $\Delta = \Delta(q, \Gamma).$
}\end{remark}

\subsection{Non-degeneracy at infinity}
In \cite{Khovanskii1978} (see also \cite{Kouchnirenko1976}), Khovanskii introduced a condition of non-degeneracy of  complex analytic maps $F \colon ({\Bbb C}^n, 0) \rightarrow ({\Bbb C}^p, 0)$ in terms of the Newton polyhedra of the component functions of $F.$ The class of non-degenerate maps is sufficiently large and plays an important role in Singularity Theory and Algebraic Geometry (see, for instance, \cite{Arnold1985}, \cite{Gaffney1992}, \cite{Oka1997}).  
We will apply this condition for real polynomial maps. First we need to introduce some notations. 

Let $F := (f_1, \ldots, f_p) \colon {\Bbb R}^n \rightarrow {\Bbb R}^p, 1 \le p \le n,$ be a polynomial map. Let $\Gamma(F)$ denote the Minkowski sum $\Gamma(f_1) + \cdots + \Gamma(f_p),$ and we denote by $\Gamma_\infty(F)$ the set of faces of $\Gamma(F)$ which do not contain the origin $0$ in ${\Bbb R}^n.$ Let $\Delta$ be a face of the $\Gamma(F).$ According to Lemma~\ref{Lemma3}, let us consider the decomposition $\Delta = \Delta_1 + \cdots + \Delta_p,$ where $\Delta_i$ is a face of $\Gamma(f_{i}),$ for all $i = 1, \ldots, p.$ We denote by $F_\Delta$ the polynomial map $(f_{1, \Delta_1}, \ldots, f_{p, \Delta_p})  \colon {\Bbb R}^n \rightarrow {\Bbb R}^p.$ 

\begin{definition}{\rm
The polynomial map $F = (f_1, \ldots, f_p)$ is called {\em convenient} if all $f_i$ are convenient, for $i = 1, \ldots, p.$ 

We say that $F$ is {\em non-degenerate at infinity} if and only if for any face $\Delta$ of $\Gamma_\infty(F)$ and for all $x \in ({\Bbb R} \setminus \{0\})^n,$ we have $\mathrm{rank} M_\Delta=p$ where
$$M_\Delta :=
\begin{pmatrix}
x_1\frac{\partial f_{1, \Delta_1}}{\partial x_1}(x) & \cdots & x_n \frac{\partial f_{1, \Delta_1}}{\partial x_n}(x) & f_{1,\Delta_1}(x) & \cdots & 0 \\
\vdots & \cdots & \vdots & & \ddots & \\
x_1\frac{\partial f_{p, \Delta_p}}{\partial x_1}(x) & \cdots & x_n \frac{\partial f_{p, \Delta_p}}{\partial x_n}(x) & 0 & \cdots & f_{p,\Delta_p}(x)
\end{pmatrix}.$$
}\end{definition}

\begin{remark}{\rm 
Compared to $F = (f_1, \ldots, f_p),$ the polynomial map $F_\Delta = (f_{1, \Delta_1}, \ldots, f_{p, \Delta_p})$ has the following two remarkable properties:
\begin{itemize}
\item The sparsity, which means that the number of monomials in $f_{i, \Delta_i}$ is much less than that in $f_i.$ By the Khovanskii's theory of fewnomials, see \cite{Khovanskii1991}, working with $F_\Delta$ is easier than with $F$.

\item $F_{\Delta}$ is quasi-homogeneous, i.e., there exists a vector $q \in \mathbb{R}^n,$ with $\min_j q_j < 0,$ such that for each $i = 1, \ldots, p,$ we have 
$\Delta_i = \Delta_i(q, \Gamma(f_i))$ and 
$$f_{i, \Delta_i} (t^{q_1}x_1, \ldots, t^{q_n}x_n) = t^{d_i} f_{i, \Delta_i} (x_1, \dots, x_n),$$ for all $t \in \mathbb{R},$ and for all $(x_1, \ldots, x_n) \in \mathbb{R}^n,$ where $d_i := d(q, \Gamma(f_i)).$
\end{itemize}
These facts, in many contexts, allows us to check easily the non-degenerate condition.
}\end{remark}

\section{The existence of a H\"{o}lder-type global error bound} \label{SectionErrorBound}

In this section we give a necessary and sufficient condition for the existence of a H\"{o}lder-type global error bound for a semi-algebraic function. This result extends Theorem A of \cite{HaHV2013} from polynomial functions to semi-algebraic functions.

Let $f \colon {\Bbb R}^n \rightarrow {\Bbb R}$ be a continuous semi-algebraic function. Assume that $S := \{x \in {\Bbb R}^n \ : \ f(x) \le 0 \} \ne \emptyset.$ Let $[f(x)]_+ := \max\{f(x), 0\}.$

\begin{theorem}\label{NSOAT}
With the notations above, the following two statements are equivalent.
\begin{enumerate}
\item[{\rm (i)}] For any sequence $x^k \in \mathbb{R}^n \setminus S, x^k \to \infty$, we have
\begin{enumerate}
\item[(i1)] if $f(x^k) \to 0$ then $d(x^k,S) \to 0;$
\item[(i2)] if $d(x^k,S) \to \infty$ then $f(x^k) \to \infty.$
\end{enumerate}

\item[{\rm (ii)}] There exist some constants $c > 0, \alpha > 0,$ and $\beta > 0$ such that
$$c d(x, S) \le [f(x)]_+^{\alpha} + [f(x)]_+^{\beta} \quad \textrm{ for all } \quad x \in \mathbb{R}^n.$$
\end{enumerate}
\end{theorem}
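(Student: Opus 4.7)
I would handle the two directions separately. For the easier direction (ii)$\Rightarrow$(i), suppose the inequality $c\,d(x,S)\le [f(x)]_+^\alpha + [f(x)]_+^\beta$ holds. If $f(x^k)\to 0$, then the right-hand side tends to $0$ (since $\alpha,\beta>0$), forcing $d(x^k,S)\to 0$; if $d(x^k,S)\to\infty$, then $[f(x^k)]_+^\alpha+[f(x^k)]_+^\beta\to\infty$, so $[f(x^k)]_+\to\infty$, i.e.\ $f(x^k)\to\infty$. No use is made of $\|x^k\|\to\infty$ in this direction.

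For the main direction (i)$\Rightarrow$(ii), I would introduce the auxiliary function
$$\phi(t) := \sup\{d(x,S) \ : \ x\in\mathbb{R}^n,\ [f(x)]_+\le t\}, \quad t>0.$$
Because $f$ is continuous semi-algebraic and $S$ is closed, the graph of $\phi$ is defined by a first-order formula in the theory of real closed fields, hence is a semi-algebraic subset of $\mathbb{R}^2$ by Tarski--Seidenberg. The heart of the proof---and the step I expect to be the main obstacle---is showing that $\phi(t)<\infty$ for every $t>0$. Assuming instead that $\phi(t_0)=+\infty$ for some $t_0>0$, one obtains $x^k$ with $[f(x^k)]_+\le t_0$ and $d(x^k,S)\to\infty$, which forces $\|x^k\|\to\infty$. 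Applying the Curve Selection Lemma at infinity (Lemma~\ref{Lemma1}) to the semi-algebraic set $\{x:f(x)\le t_0\}$ and the semi-algebraic map $x\mapsto d(x,S)$, one extracts a smooth semi-algebraic arc $\varphi(s)$ with $\|\varphi(s)\|\to\infty$, $f(\varphi(s))\le t_0$, and $d(\varphi(s),S)\to\infty$ as $s\to 0^+$. Sampling along any $s_k\to 0^+$ then yields a sequence satisfying the hypothesis of (i2) but violating its conclusion, a contradiction. This conversion of the pointwise hypothesis (i2) into a uniform bound on $\phi$ is precisely where the semi-algebraic machinery is indispensable.

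Next I would show $\phi(t)\to 0$ as $t\to 0^+$: if not, one gets $x^k$ with $[f(x^k)]_+\to 0$ and $d(x^k,S)\ge\varepsilon$ for some $\varepsilon>0$, so $x^k\notin S$. Either $\{x^k\}$ has a bounded subsequence---whose limit point lies in $S$ by continuity of $f$, contradicting $d(x^k,S)\ge\varepsilon$---or $\|x^k\|\to\infty$ with $0<f(x^k)\to 0$, which is excluded by (i1). Finally, $\phi$ is a non-decreasing, finite-valued, semi-algebraic function on $(0,\infty)$ with $\lim_{t\to 0^+}\phi(t)=0$. Applying the Growth Dichotomy Lemma (Lemma~\ref{GrowthDichotomyLemma}) as $t\to 0^+$ and the analogous statement at infinity (via the change of variable $t\mapsto 1/s$), there exist $\alpha,\beta>0$ and constants $C_0,C_\infty>0$ with $\phi(t)\le C_0 t^\alpha$ for small $t$ and $\phi(t)\le C_\infty t^\beta$ for large $t$; on any compact intermediate range $\phi$ is bounded while $t^\alpha+t^\beta$ is bounded below, so enlarging the constant gives $\phi(t)\le C(t^\alpha+t^\beta)$ on all of $(0,\infty)$. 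Since $d(x,S)\le\phi([f(x)]_+)$ for every $x\notin S$ and (ii) is trivial on $S$, this yields the desired H\"older-type bound.
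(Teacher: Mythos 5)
Your argument is correct and takes essentially the same route as the paper (which reduces the theorem to its Lemmas \ref{near} and \ref{far}): you encode the residual-to-distance relation in a one-variable semi-algebraic function $\phi$, use (i2) for finiteness far from $S$ and (i1) for decay near $S$, and apply the Growth Dichotomy Lemma at $0$ and at infinity before gluing the regimes. The only superfluous step is the appeal to the Curve Selection Lemma in proving $\phi(t_0)<\infty$: a sequence $x^k$ with $f(x^k)\le t_0$ and $d(x^k,S)\to\infty$ automatically satisfies $\|x^k\|\to\infty$ and $x^k\notin S$, so it already contradicts (i2) directly, no arc needed.
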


The proof is essentially the same as the proof of \cite[Theorem A]{HaHV2013} (see also \cite[Proposition 3.10]{Dinh2012}), in fact, the theorem follows from the next two lemmas that we leave the reader verifying the details.

\begin{lemma}[H\"older-type error bound ``near to $S$"]\label{near}
The following two statements are equivalent.
\begin{enumerate}
\item[{\rm(i)}] For any sequence $x^k \in \R^n \setminus S,$ with $x^k \to \infty,$ it holds that
$$f(x^k) \to 0 \quad \Longrightarrow \quad d(x^k,S) \to 0;$$
\item[{\rm(ii)}] There exist some constants $c > 0, \delta > 0,$ and $\alpha > 0$ such that
$$c d(x,S) \le [f(x)]_+^{\alpha}  \quad \textrm{ for all } \quad x \in f^{-1}((-\infty, \delta]).$$
\end{enumerate}
\end{lemma}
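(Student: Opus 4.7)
The direction (ii)$\Rightarrow$(i) is immediate: given $x^k\to\infty$ with $f(x^k)\to 0$, eventually $f(x^k)\le \delta$, and (ii) yields $c\,d(x^k,S)\le [f(x^k)]_+^{\alpha}\to 0$. For the nontrivial direction (i)$\Rightarrow$(ii), my plan is to introduce the one-variable semi-algebraic function
$$h(t) \;:=\; \sup\bigl\{ d(x,S) \ :\ x\in\mathbb{R}^n,\ [f(x)]_+\le t\bigr\},\qquad t\ge 0,$$
taking values in $[0,+\infty]$. By Tarski--Seidenberg and continuity of $f$, the function $h$ is non-decreasing and semi-algebraic, with $h(0)=0$ since $\{f\le 0\}=S$. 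Because $d(x,S)\le h([f(x)]_+)$ for every $x$, it is enough to prove an estimate of the form $h(t)\le C\,t^{\alpha}$ on some interval $(0,\delta]$.

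The analysis of $h$ then proceeds in three substeps. \emph{(a) Finiteness:} $h$ is finite on some $(0,\delta_0]$. Otherwise $h(1/k)=+\infty$ for all $k$ would produce $x^k$ with $[f(x^k)]_+\le 1/k$ and $d(x^k,S)\ge k$; as $S\ne\emptyset$, fixing $s\in S$ gives $\|x^k\|\ge d(x^k,S)-\|s\|\to\infty$, and hypothesis (i) then forces $d(x^k,S)\to 0$, contradicting $d(x^k,S)\ge k$. \emph{(b) Vanishing at $0$:} if $\limsup_{t\to 0^+}h(t)>0$, pick $x^k$ with $[f(x^k)]_+\to 0$ and $d(x^k,S)\ge \varepsilon>0$; a bounded subsequence would converge by continuity of $f$ and closedness of $S$ to some $x^*\in S$ with $d(x^*,S)\ge \varepsilon$, impossible, while an unbounded subsequence triggers (i) and forces $d(x^k,S)\to 0$, again impossible. \emph{(c) Growth dichotomy:} on a sufficiently small interval either $h\equiv 0$ (in which case closedness of $S$ forces $\{0<f\le \varepsilon\}=\emptyset$ and (ii) is trivial) or $h>0$; in the latter case the Growth Dichotomy Lemma gives $h(t)=at^{\alpha}+o(t^{\alpha})$ as $t\to 0^+$ with $a>0$ and $\alpha>0$. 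Choosing $\delta>0$ so that $h(t)\le 2at^{\alpha}$ on $(0,\delta]$ yields $d(x,S)\le 2a\,[f(x)]_+^{\alpha}$ for all $x\in f^{-1}((-\infty,\delta])$ (the bound is trivial on $S$), so (ii) holds with $c:=1/(2a)$.

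The main obstacle is substep (b): hypothesis (i) only controls sequences escaping to infinity, so the proof must carefully separate the bounded and unbounded cases and invoke continuity of $f$ together with closedness of $S$ for the former and hypothesis (i) for the latter. Once $h$ is known to be a bounded, non-decreasing, semi-algebraic function tending to $0$ at the origin, the reduction to a one-variable problem and the invocation of the Growth Dichotomy Lemma to extract the H\"older exponent is essentially mechanical.
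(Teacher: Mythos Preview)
Your argument is correct. The paper does not actually supply a proof of this lemma: it simply states that the proof is ``essentially the same as the proof of \cite[Theorem A]{HaHV2013} (see also \cite[Proposition 3.10]{Dinh2012})'' and leaves the details to the reader. Your approach---introducing the semi-algebraic envelope $h(t)=\sup\{d(x,S):[f(x)]_+\le t\}$, reducing to a one-variable problem, and invoking the Growth Dichotomy Lemma---is the standard route and is exactly the kind of argument those references carry out. The separation in substep~(b) into bounded and unbounded subsequences is handled correctly, and your observation that $h\equiv 0$ near the origin forces $\{0<f\le\varepsilon\}=\emptyset$ (via closedness of $S$) disposes of the degenerate case cleanly.
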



\begin{lemma}[H\"older-type error bound ``far from $S$"]\label{far}
Suppose that for any sequence $x^k \in \R^n \setminus S,$ with $x^k \to \infty,$ it holds that
$$d(x^k,S) \to \infty  \quad \Longrightarrow \quad f(x^k) \to \infty;$$
Then there exist some constants $c > 0, r > 0,$ and $\beta > 0$ such that
$$c d(x,S)  \le [f(x)]_+^{\beta} \quad \textrm{ for all } \quad x \in f^{-1}([r, + \infty)).$$
\end{lemma}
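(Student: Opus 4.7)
The plan is to reduce the statement to a growth estimate for a single semi-algebraic function of one real variable, and then to invoke the Growth Dichotomy Lemma (Lemma~\ref{GrowthDichotomyLemma}). To this end, introduce the nondecreasing envelope
$$g \colon \mathbb{R} \to [0, +\infty], \qquad g(r) \ := \ \sup\{d(x,S) \ : \ x \in \mathbb{R}^n, \ f(x) \le r\},$$
with the convention $\sup\emptyset = 0$. Since the set $\{(r, s) \in \mathbb{R}^2 \ : \ \forall x \in \mathbb{R}^n,\ f(x) \le r \Rightarrow d(x, S) \le s\}$ is definable from $f$ and $S$, Tarski--Seidenberg shows that $g$ is a semi-algebraic (extended-real-valued) function, and by construction $d(x, S) \le g(f(x))$ for every $x$. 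So it suffices to produce a bound of the form $g(r) \le C r^\beta$ for $r$ large.

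The crux is showing that $g(r)$ is actually finite for every $r$. Suppose $g(r_0) = +\infty$; then there is a sequence $x^k$ with $f(x^k) \le r_0$ and $d(x^k, S) \to +\infty$. Fixing any $s_0 \in S$, the inequality $\|x^k\| \ge \|x^k - s_0\| - \|s_0\| \ge d(x^k, S) - \|s_0\|$ forces $\|x^k\| \to +\infty$, and clearly $x^k \notin S$ once $d(x^k, S) > 0$. The hypothesis of the lemma then yields $f(x^k) \to +\infty$, contradicting $f(x^k) \le r_0$. Hence $g \colon \mathbb{R} \to [0, +\infty)$ is real-valued.

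Being nondecreasing and semi-algebraic, $g$ either is bounded on $\mathbb{R}$ by some $M$---in which case $\beta := 1$, $c := 1$, and $r := M$ yield the conclusion trivially---or satisfies $g(r) \to +\infty$ as $r \to +\infty$. In the second case, apply the Growth Dichotomy Lemma to the semi-algebraic function $t \mapsto g(1/t)$ on a small interval $(0, \epsilon)$, obtaining $g(1/t) = c_0 t^q + o(t^q)$ as $t \to 0^+$ with $c_0 \ne 0$ and $q \in \mathbb{Q}$. Equivalently, $g(r) = c_0 r^{-q} + o(r^{-q})$ as $r \to +\infty$; since $g(r) \to +\infty$, necessarily $c_0 > 0$ and $\beta := -q > 0$. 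Choosing $C > c_0$ and $r > 0$ large enough gives $g(r') \le C(r')^\beta$ for $r' \ge r$, whence $d(x, S) \le g(f(x)) \le C [f(x)]_+^\beta$ for every $x \in f^{-1}([r, +\infty))$, and $c := 1/C$ finishes the argument.

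The principal obstacle is the finiteness of $g$, which is exactly where the asymptotic hypothesis is exploited. Once this is established, the passage from finiteness to a polynomial bound is a routine application of semi-algebraic techniques via the one-dimensional Growth Dichotomy.
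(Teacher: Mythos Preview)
Your proof is correct. The paper itself omits the argument for this lemma, pointing the reader to \cite{HaHV2013} and \cite{Dinh2012} and leaving the details to be verified; your reduction via the monotone semi-algebraic envelope $g(r)=\sup\{d(x,S):f(x)\le r\}$, followed by the Growth Dichotomy Lemma applied to $t\mapsto g(1/t)$, is precisely the kind of one-variable semi-algebraic argument those references employ, and the finiteness step is where the hypothesis is used in the intended way.
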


\section{The Palais-Smale condition and H\"{o}lder-type global error bounds} \label{SectionPalaisSmale}

The relation between the Palais-Smale condition and the existence of error bounds is well-known, see, for example, 
\cite{Auslender1989}, \cite{Auslender1993}, \cite{Corvellec2008}, \cite{HaHV2013}, \cite{Ioffe2000}, \cite{Lemaire1992},  \cite{Penot1998}.
In this part, we describe this relation in a more convenient form for our present purposes.

First of all, we recall the notion of subdifferential-that is, an appropriate multivalued operator playing the role of the usual gradient map-which is crucial for our considerations. For nonsmooth analysis we refer to the comprehensive texts \cite{Clarke1983}, \cite{Clarke1998}, \cite{Mordukhovich2006}, \cite{Rockafellar1998}.

\begin{definition}{\rm
\begin{enumerate}
  \item[(i)] The {\em Fr\'echet subdifferential} $\hat{\partial} f(x)$ of a continuous function $f \colon {\Bbb R}^n \rightarrow {\Bbb R}$ at $x \in {\Bbb R}^n$ is given by
$$\hat{\partial} f(x) := \left \{ v \in {\Bbb R}^n \ : \ \liminf_{\| h \| \to 0, \ h \ne 0} \frac{f(x + h) - f(x) - \langle v, h \rangle}{\| h \|} \ge 0 \right \}.$$
  \item[(ii)]  The {\em limiting subdifferential} at $x \in {\Bbb R}^n,$ denoted by ${\partial} f(x),$ is the set of all cluster points of sequences $\{v^k\}_{k \ge 1}$ such that $v^k\in \hat{\partial} f(x^k)$ and $(x^k, f(x^k)) \to (x, f(x))$ as $k \to \infty.$
\end{enumerate}
}\end{definition}

\begin{remark}{\rm
It is a well-known result of variational analysis that
$\hat{\partial} f(x)$ (and a fortiori $\partial f(x)$) is not empty
in a dense subset of the domain of $f$ (see \cite{Rockafellar1998},
for example).
}\end{remark}

\begin{definition}{\rm
Using the limiting subdifferential $\partial f,$ we define the {\em
nonsmooth slope} of $f$ by
$${\frak m}_f(x) := \inf \{ \|v\| \ : \ v \in {\partial} f(x) \}.$$
By definition, ${\frak m}_f(x) = + \infty$ whenever ${\partial} f(x)
= \emptyset.$
}\end{definition}

\begin{remark}{\rm
(i) If the function $f$ is of class $C^1,$ the above notion coincides
with the usual concept of gradient; that is, ${\partial} f(x) =
\hat{\partial} f(x) = \{\nabla f(x) \},$ and hence ${\frak m}_f(x) =
\|\nabla f(x) \|.$

(ii) By Tarski-Seidenberg Theorem (see \cite{Benedetti1991}, \cite{Bochnak1998}), it is not hard to show that if the function $f$ is semi-algebraic then so is ${\frak m}_f.$
}\end{remark}

\begin{definition}{\rm
Given a continuous function $f \colon {\Bbb R}^n \rightarrow {\Bbb R}$ and a real number $t,$ we say that $f$ satisfies the {\em Palais-Smale condition at the level} $t,$ if every sequence $\{x^k\}_{k \in {\Bbb N}} \subset {\Bbb R}^n$ such that $f(x^k) \to t$ and ${\frak m}_f(x^k) \to 0$ as $k \to \infty$ possesses a convergence subsequence.
}\end{definition}

The following result extends \cite[Theorem B]{HaHV2013} from polynomial functions to semi-algebraic functions.

\begin{theorem} \label{TheoremPalais-Smale-Global-Holderian}
Let $f \colon {\Bbb R}^n \rightarrow {\Bbb R}$ be a continuous semi-algebraic function. Assume that $S := \{x \in {\Bbb R}^n \ : \ f(x) \le 0 \} \ne \emptyset.$ If $f$ satisfies the Palais-Smale condition at each level $t \ge 0$, then there exist some constants $c > 0, \alpha > 0,$ and $\beta > 0$ such that
$$c d(x,S) \le [f(x)]_+^{\alpha} + [f(x)]_+^{\beta} \quad \textrm{ for all } \quad x \in \mathbb{R}^n.$$
\end{theorem}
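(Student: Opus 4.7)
The plan is to invoke Theorem~\ref{NSOAT} and verify its two asymptotic conditions (i1) and (i2). For both I argue by contradiction, producing a Palais-Smale sequence that escapes to infinity via Ekeland's variational principle.

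For (i1), suppose there exist $\epsilon > 0$ and $x^k \to \infty$ with $0 < f(x^k) \to 0$ and $d(x^k, S) \ge \epsilon$. The compact ball $\overline{B}(x^k, \epsilon/2)$ is disjoint from $S$, so $f$ attains a positive minimum $c_k$ there. I apply Ekeland's principle on this ball with perturbation $\mu_k := f(x^k) - c_k$ and radius $\lambda_k := \sqrt{f(x^k)}$ (which is $< \epsilon/2$ for $k$ large), producing $z^k$ in the interior with $f(z^k) \le f(x^k)$, $\|z^k - x^k\| \le \lambda_k$, and $\mathfrak{m}_f(z^k) \le \mu_k/\lambda_k \le \sqrt{f(x^k)}$. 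Then $z^k \notin S$, $\|z^k\| \to \infty$, $f(z^k) \to 0$, and $\mathfrak{m}_f(z^k) \to 0$, contradicting the Palais-Smale condition at level $0$.

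For (i2), suppose (after passing to a subsequence) $x^k \to \infty$ with $d(x^k, S) \to \infty$ and $f(x^k) \to t \in [0, \infty)$. Fixing $y_0 \in S$, one has $\|x^k\| \ge d(x^k, S) - \|y_0\|$. Set $R_k := d(x^k, S)/2$; the ball $\overline{B}(x^k, R_k)$ is again disjoint from $S$, and $f$ attains there a positive minimum $c_k$. Applying Ekeland with $\mu_k := f(x^k) - c_k$ and $\lambda_k := R_k/2$ yields $z^k$ with $\|z^k - x^k\| \le R_k/2$, $f(z^k) \le f(x^k)$, and $\mathfrak{m}_f(z^k) \le \mu_k/\lambda_k \le 4 f(x^k)/d(x^k, S) \to 0$. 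The estimate $\|z^k\| \ge \|x^k\| - R_k/2 \ge \tfrac{3}{4} d(x^k, S) - \|y_0\|$ forces $\|z^k\| \to \infty$, while $0 < f(z^k) \le f(x^k)$ stays bounded, so a further subsequence gives $f(z^k) \to s$ for some $s \in [0, t]$. This again contradicts the Palais-Smale condition at level $s$.

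The main obstacle is the parameter balance in (i2): the Ekeland radius must be proportional to $d(x^k, S)$ for $\mu_k/\lambda_k$ to vanish, strictly smaller than $d(x^k, S)$ to keep $z^k$ outside $S$, and compatible with $\|z^k\| \to \infty$, which is secured by the elementary bound $\|x^k\| \ge d(x^k, S) - \|y_0\|$. Converting the Ekeland perturbation inequality $f(y) \ge f(z^k) - (\mu_k/\lambda_k) \|y - z^k\|$ into the bound $\mathfrak{m}_f(z^k) \le \mu_k/\lambda_k$ on the limiting-subdifferential slope is standard, following the line of argument used in \cite[Theorem B]{HaHV2013}.
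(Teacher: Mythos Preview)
Your proof is correct and follows essentially the same strategy as the paper's: reduce to the criterion of Theorem~\ref{NSOAT} and, in each contradictory case, manufacture an unbounded Palais--Smale sequence via a variational principle. The only technical difference is that the paper applies the Borwein--Preiss smooth variational principle globally to $f_+$, while you apply Ekeland's principle on compact balls disjoint from $S$ (the route of \cite[Theorem~B]{HaHV2013}, which the paper explicitly cites as the alternative); the resulting parameter choices and contradictions are otherwise parallel.
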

\begin{proof}
The proof is similar to that of \cite[Theorem B]{HaHV2013}. However, instead of using the Ekeland Variational Principle \cite{Ekeland1979}, we use a version of the variational principle of Borwein and Preiss  (see \cite{Borwein1987}, \cite[Theorem 4.2]{Clarke1998}).

It is sufficient to show that the condition (i) in Theorem~\ref{NSOAT} holds. We proceed by the method of contradiction.

We first assume that there exist a number $\delta > 0$ and a sequence $x^k \in \mathbb{R}^n \setminus S,$ with $x^k \to \infty,$ such that
$$f(x^k) \to 0 \quad \textrm{ and } \quad d(x^k,S) \ge \delta.$$
Let us consider the continuous semi-algebraic function
$$f_+ \colon {\Bbb R}^n \rightarrow {\Bbb R}, \quad x \mapsto \max\{f(x), 0\}.$$
Clearly, $\inf\limits_{x \in \mathbb{R}^n} f_+(x) = 0.$ Applying the Minimization Principle \cite[Theorem 4.2]{Clarke1998} to the function $f_+$ with data $\epsilon := f_+(x^k) = f(x^k) > 0$ and $\lambda := \frac{\delta}{4} > 0,$ we find points $y^k$ and $z^k$ in $\mathbb{R}^n$ such that
\begin{eqnarray*}
&& \|z^k - x^k\| < \lambda, \quad \|y^k - z^k\| < \lambda, \quad f_+(y^k) \le  f_+(x^k),
\end{eqnarray*}
and such that the function
\begin{eqnarray*}
&& x \mapsto f_+(x) + \frac{\epsilon}{\lambda^2} \| x - z^k\|^2
\end{eqnarray*}
is minimized over $\mathbb{R}^n$ at $y^k.$ We deduce from the above inequalities that
$$\|y^k - x^k\| \ \le \ \|z^k - x^k\| + \|y^k - z^k\| \ < \ 2\lambda \ = \ \frac{\delta}{2},$$
which yields that $\lim_{k \to \infty} \|y^k\| = \infty$ and
\begin{eqnarray*}
d(y^k, S)
&\ge& d(x^k, S) - d(x^k, y^k) \ >  \ d(x^k, S) - \frac{\delta}{2} \  \ge  \ \frac{\delta}{2}.
\end{eqnarray*}
Hence,
$$B(y^k, \frac{\delta}{2}) := \left \{x \in {\Bbb R}^n \ : \ \|x - y^k\| < \frac{\delta}{2} \right \} \subset {\Bbb R}^n \setminus S.$$In particular, we have $f_+(x) = f(x)$ for all $x \in B(y^k, \frac{\delta}{2}).$
Consequently, the function
\begin{eqnarray*}
&& x \mapsto f(x) + \frac{\epsilon}{\lambda^2} \| x - z^k\|^2
\end{eqnarray*}
attains its minimum on the open ball $B(y^k, \frac{\delta}{2})$ at $y^k.$ Then, by the Fermat's rule generalized \cite[Theorem 10.1]{Rockafellar1998}, we get
$$-2 \frac{\epsilon}{\lambda^2} (y^k - z^k) \in \partial f(y^k) .$$
Therefore
$${\frak m}_f(y^k) \ \le  \ 2 \frac{\epsilon}{\lambda^2}\| y^k - z^k \|  \ \le  \ 2 \frac{\epsilon}{\lambda} \ \le \ \frac{8f(x^k)}{\delta}.$$
By letting $k$ tend to infinity, we obtain
$$\lim_{k \to \infty} \|y^k\| = \infty, \quad \lim_{k \to \infty} f(y^k) = 0, \quad \textrm{ and } \quad \lim_{k \to \infty} {\frak m}_f(y^k) = 0.$$So, $f$ does not satisfy the Palais-Smale condition at the value $t = 0,$ and a contradiction follows.

We next suppose that there exist a number $M > 0$ and a sequence $x^k \in \mathbb{R}^n \setminus S,$ with $x^k \to \infty,$ such that
$$d(x^k,S) \to \infty \quad \textrm{ and } \quad 0 < f(x^k) \le M.$$
Again, we see that $\inf_{x \in {\Bbb R}^n} f_+(x) = 0.$   We now apply the Minimization Principle \cite[Theorem 4.2]{Clarke1998} to the function $f_+$ with data $\epsilon := f_+(x^k) = f(x^k) > 0$ and $\lambda := \frac{d(x^k,S)}{4} > 0;$ there exist points $y^k$ and $z^k$ in ${\Bbb R}^n$ with
\begin{eqnarray*}
&& \|z^k - x^k\| < \lambda, \quad \|y^k - z^k\| < \lambda, \quad f_+(y^k) \le  f_+(x^k),
\end{eqnarray*}
and having the property that the function
\begin{eqnarray*}
&& x \mapsto f_+(x) + \frac{\epsilon}{\lambda^2} \| x - z^k\|^2
\end{eqnarray*}
has a unique minimum at  $y^k.$ We deduce from the above inequalities that
\begin{eqnarray*}
d(y^k, S)
&\ge& d(x^k, S) - d(x^k, y^k) \\
&\ge& d(x^k, S) - 2\lambda \ = \ \frac{d(x^k,S)}{2},
\end{eqnarray*}
which yields $\lim_{k \to \infty} d(y^k, S)  = \infty.$ In particular, we get $y^k \in \mathbb{R}^n \setminus S$ and $y^k \to \infty$.

By an argument as above, we can easily deduce again that
$${\frak m}_f(y^k)  \ \le \ 2 \frac{\epsilon}{\lambda^2}\| y^k - z^k \|  \ \le \  2 \frac{\epsilon}{\lambda} \ \le \ \frac{8M}{d(x^k,S)}.$$
Hence,
$$\lim_{k \to \infty} {\frak m}_f(y^k) = 0.$$
Note that $0 <  f(y^k) \le  f(x^k) \le M$ for all $k \ge 1.$ Hence, by passing to subsequences if necessary, we may assume that there exists the limit $t := \lim_{k \to \infty}  f(y^k).$ Therefore $f$ does not satisfy  the Palais-Smale condition at $t,$ which is a contradiction. The proof of Theorem~ \ref{TheoremPalais-Smale-Global-Holderian} is complete.
\end{proof}

\section{Goodness at infinity and H\"{o}lder-type global error bounds} \label{SectionGoodatinfinity}
The aim of this section is to establish a H\"{o}lder-type global error bound (with the exponent $\beta = 1$) for continuous semi-algebraic functions which have good asymptotic behavior at infinity. Let $f \colon {\Bbb R}^n \rightarrow {\Bbb R}$ be a continuous function. For $x\in{\Bbb R}^n$, set 
$$f_+(x) := \max\{f(x), 0\}.$$
Then $f_+$ is also a continuous function. Let us begin with the following definition.

\begin{definition}{\rm
A continuous function $f$ is said to be {\em good at infinity} if there exist some constants $c > 0$ and $R > 0$ such that
\begin{equation*}
{\frak m}_{f} (x) \ge c \quad \textrm{ for all } \quad x \in f^{-1}((0, +\infty)) \ \textrm{ and } \ \|x\| \ge R.
\end{equation*}
}\end{definition}

The main result of this section is as follows:

\begin{theorem} \label{GoodnessTheorem}
Let $f$ be a continuous semi-algebraic function which is good at infinity. Assume that $S := \{x \in {\Bbb R}^n \ : \ f(x) \le 0 \} \ne \emptyset.$ Then there exist some constants $c > 0$ and $\alpha > 0$ such that
$$c d(x,S) \le [f(x)]_+^{\alpha} + [f(x)]_+  \quad \textrm{ for all } \quad x \in \mathbb{R}^n.$$
\end{theorem}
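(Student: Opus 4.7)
The plan is to apply the Borwein–Preiss minimization principle (as in the proof of Theorem~\ref{TheoremPalais-Smale-Global-Holderian}) to the truncation $f_+$ twice: once to verify hypothesis (i) of Lemma~\ref{near} and extract a near-to-$S$ H\"older estimate, and once to produce a global ``linear-plus-constant'' bound of the form $d(x,S) \le A[f(x)]_+ + B$. These two estimates are then glued into the required form $c\, d(x,S) \le [f(x)]_+^\alpha + [f(x)]_+$.

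Denote by $c_0, R$ the constants supplied by the goodness-at-infinity hypothesis, and set $r_0 := d(0,S)$. For the near estimate I argue by contradiction exactly as in the first half of the proof of Theorem~\ref{TheoremPalais-Smale-Global-Holderian}: if $x^k \to \infty$ satisfies $f(x^k) \to 0^+$ but $d(x^k,S) \ge \delta > 0$, then applying the minimization principle to $f_+$ with $\epsilon := f(x^k)$ and $\lambda := \delta/4$ produces $y^k$ with $\|y^k - x^k\| < \delta/2$, hence $y^k \notin S$, so $f_+ \equiv f$ on a neighborhood of $y^k$ and $\mathfrak{m}_f(y^k) \le 8 f(x^k)/\delta \to 0$, while $\|y^k\| \to \infty$; this contradicts $\mathfrak{m}_f(y^k) \ge c_0$. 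Condition (i) of Lemma~\ref{near} therefore holds, and Lemma~\ref{near} supplies constants $c_1, \delta, \alpha > 0$ with
\begin{equation*}
c_1\, d(x,S) \le [f(x)]_+^{\alpha} \quad \text{whenever } f(x) \le \delta.
\end{equation*}

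For the global estimate, fix any $x$ with $f(x) > 0$ and apply the minimization principle to $f_+$ with $\epsilon := f(x)$ and $\lambda := 4 f(x)/c_0$; the same computation as in the paper's proof of Theorem~\ref{TheoremPalais-Smale-Global-Holderian} gives points $y, z$ with $\|y - x\| < 2\lambda = 8 f(x)/c_0$ and $\mathfrak{m}_{f_+}(y) \le 2\epsilon/\lambda = c_0/2$. If $y \in S$, then $d(x,S) \le \|y - x\| \le 8 f(x)/c_0$. Otherwise $y \notin S$, so $f > 0$ in a neighborhood of $y$, whence $\mathfrak{m}_f(y) = \mathfrak{m}_{f_+}(y) \le c_0/2 < c_0$; by goodness at infinity this forces $\|y\| < R$, hence $\|x\| \le \|y\| + \|y - x\| < R + 8 f(x)/c_0$ and $d(x,S) \le \|x\| + r_0 \le (R + r_0) + 8 f(x)/c_0$. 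In either case,
\begin{equation*}
d(x,S) \le (8/c_0)\, f(x) + (R + r_0) \quad \text{whenever } f(x) > 0.
\end{equation*}

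To conclude, on $\{0 < f(x) \le \delta\}$ use the near estimate to get $d(x,S) \le c_1^{-1} [f(x)]_+^{\alpha}$; on $\{f(x) \ge \delta\}$ rewrite the global estimate as $d(x,S) \le \bigl(8/c_0 + (R + r_0)/\delta\bigr) f(x) =: C [f(x)]_+$ (absorbing the constant via $1 \le f(x)/\delta$); on $\{f(x) \le 0\}$, $d(x,S) = 0$. Unifying these, $d(x,S) \le c_1^{-1}[f(x)]_+^{\alpha} + C[f(x)]_+$ for all $x \in \mathbb{R}^n$, and setting $c := \min(c_1, 1/C)$ yields the required inequality. The main obstacle is that the Borwein–Preiss output $y$ need not lie in $S$ nor in the region $\|y\| \ge R$ where goodness at infinity applies; this dichotomy is precisely what introduces the additive constant $R + r_0$ in the global step, and it is only absorbed after pairing the global estimate with the near estimate to handle small values of $f$ separately.
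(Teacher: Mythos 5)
Your proof is correct, but it takes a genuinely different route from the paper's. The paper proves Theorem~\ref{GoodnessTheorem} by (a) invoking the classical \L ojasiewicz inequality on the compact ball $\|x\|\le R$, and (b) for $f(x)>0$ with $\|x\|>R$, following the subgradient trajectory of $-\partial f_+$ emanating from $x$ (Bolte--Daniilidis--Lewis): the goodness bound $\mathfrak{m}_{f_+}\ge c_1$ makes the decrease of $f_+$ along the trajectory dominate $c_1$ times its arc length, so either the trajectory converges to a point of $S$ (giving the clean linear bound $c_1 d(x,S)\le f_+(x)$ with no additive constant) or it hits the sphere $\|u(t)\|=R$, where the local \L ojasiewicz estimate takes over. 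You instead apply the Borwein--Preiss principle twice: once to verify hypothesis (i) of Lemma~\ref{near} (so the ``near'' H\"older estimate comes from that lemma rather than from the local \L ojasiewicz inequality on a ball), and once to get the ``far'' bound $d(x,S)\le (8/c_0)f(x)+(R+r_0)$, whose additive constant you then absorb on $\{f\ge\delta\}$ using $1\le f(x)/\delta$. Both arguments are sound; yours is more self-contained relative to Section~\ref{SectionPalaisSmale} (it reuses only the variational-principle computation already carried out there and avoids the existence and chain-rule machinery for subgradient flows), while the paper's flow argument yields the linear term directly and, as a byproduct, Corollary~\ref{CorollaryGoodness}, which your dichotomy (with its unavoidable constant $R+r_0$) does not immediately give. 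Note also that you lean on Lemma~\ref{near}, which the paper states but leaves to the reader, so a fully self-contained write-up would still need its (standard compactness-plus-\L ojasiewicz) proof.
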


\begin{proof} 
Let us consider the continuous semi-algebraic function
$$f_+ \colon {\Bbb R}^n \rightarrow {\Bbb R}, \quad x \mapsto \max\{f(x), 0\}.$$ 
By definition, if $f(x) > 0$ then $f_+(x) = f(x),$ $\partial f_+(x) = \partial f(x)$ and ${\frak m}_{f_+}(x) = {\frak m}_{f}(x).$ 
Since $f$ is good at infinity, there exist some constants $c_1 > 0$ and $R > 0$ such that 
\begin{equation}\label{Good1}
{\frak m}_{f_+} (x)={\frak m}_{f} (x) \ge c_1 \quad \textrm{ for all } \quad x \in f^{-1}((0, +\infty))=f_+^{-1}((0, +\infty)) \ \textrm{ and } \ \|x\| \ge R.
\end{equation}

Thanks to the classical \L ojasiewicz inequality \cite{Hormander1958}, \cite{Lojasiewicz1958}, there are constants $c_2 > 0$ and $\alpha > 0$ such that
\begin{equation}\label{Good2}
c_2 d(x, S)  \le f_+(x) ^{\alpha} \quad \textrm{ for all } \quad \|x\| \le R.
\end{equation}

Let $x \in {\Bbb R}^n$ be such that $x \in {f_+}^{-1}((0, +\infty))$ and $\|x \| > R.$ By \cite[Corollary 4.1]{Bolte2007}, there exists a maximal absolutely continuous curve $u \colon [0, \infty) \to {\Bbb R}^n$ of the dynamical system 
$$0 \in \dot{u}(s) + \partial [f_+(u(s))]$$
satisfying $u(0)=x.$ In addition, the function $s \mapsto (f_+ \circ u)(s)$ is absolutely continuous and strictly decreasing on $[0, + \infty).$ By \cite[Corollary 4.2]{Bolte2007}, we have for almost all $s \in [0, +\infty),$ 
\begin{equation}\label{Good3}
\| \dot{u}(s)\|=\frak m_{f_+}(u(s)) \quad \textrm{ and } \quad \frac{d}{ds}({f_+} \circ u)(s) = -[\frak m_{f_+}(u(s))]^2.
\end{equation}

We have the following remark.  Suppose that $f_+(u(s)) > 0$ and $\| u(s)\| \ge R$ for all $s \in [t_1, t_2],$ for some  $0 \le t_1 < t_2.$ It follows from the relations (\ref{Good1}) and (\ref{Good3}) that
\begin{eqnarray*}
f_+(u(t_1) ) - f_+(u(t_2)) &=& - \int_{t_1}^{t_2} \frac{d}{ds} (f_+ \circ u)(s) ds \ = \ \int_{t_1}^{t_2}  [{\frak m}_{f_+}(u(s))]^2ds \\
&\ge&   \int_{t_1}^{t_2}  c_1  {\frak m}_{f_+} (u(s)) ds  = \int_{t_1}^{t_2} c_1 \|\dot{u}(s)\|ds, 
\end{eqnarray*}
which yields
\begin{eqnarray}\label{Good4}
f_+(u(t_1) ) - f_+(u(t_2)) &\ge& c_1 \| u(t_1) - u(t_2)\|.
\end{eqnarray}
Hence the curve $u$ has finite length, and so it is bounded. In view of \cite[Theomrem 4.5]{Bolte2007}, there exists the limit
$a := \lim_{s \to \infty} f(u(s)).$ In addition, we have ${\frak m}_f(a) = 0.$ Let 
$$t := \inf\{ s \ : \ \|u(s) \| > R \}.$$
There are two cases to be considered.

\subsubsection*{Case 1: $t = \infty;$ i.e.,  $\|u(s)\| > R$ for all $s \ge 0$} \

Since 
${\frak m}_f(a) = 0,$ it follows from the inequality (\ref{Good1}) that $f_+(a) = 0.$ Therefore, by (\ref{Good4}), we obtain
\begin{eqnarray*}
f_+(x)  \ = \ f_+(x) - f_+(a) & \ge & c_1 \|u(0) - a\| \ = \ 
c_1 \|x - a\| \ \ge \ c_1 d(x, S).
\end{eqnarray*}

\subsubsection*{Case 2: $t  < \infty$} \

We have $\| u(t) \| = R.$ Then it follows from (\ref{Good1}), (\ref{Good2}) and (\ref{Good4}) that
\begin{eqnarray*}
d(x, S) &\le& d(x,u(t))+d(u(t), S)\\
&\le& \frac{f_+(x) - f_+(u(t))}{c_1} + \frac{(f_+(u(t)))^{\alpha}}{c_2} \\
&\le& \frac{f_+(x)}{c_1} + \frac{(f_+(x))^{\alpha}}{c_2}.
\end{eqnarray*}

In summary, in both cases, we have
$$c d(x, S) \le f_+(x) + (f_+(x))^{\alpha},$$
where $c := \min\{c_1, c_2\} > 0.$ This, together with (\ref{Good2}), implies the required result.
\end{proof}

We deduce immediately the following corollary from the proof of Theorem \ref{GoodnessTheorem} (see also \cite[Theorem 3.1]{Wu2001}).

\begin{corollary}  \label{CorollaryGoodness}
Let $f$ be a continuous semi-algebraic function. Suppose that $S := \{x \in {\Bbb R}^n \ : \ f(x) \le 0 \} \ne \emptyset$ and there exists a constant $c > 0$ such that
$${\frak m}_{f} (x) \ge c, \quad \textrm{ for all } \quad x \in f^{-1}((0, + \infty)).$$
Then the following linear global error bound holds
$$c d(x,S) \le [f(x)]_+, \quad \textrm{ for all } \quad x \in \mathbb{R}^n.$$
\end{corollary}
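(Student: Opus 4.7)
The plan is to extract, from the proof of Theorem~\ref{GoodnessTheorem}, exactly the piece that handles the situation where the gradient-like trajectory of $f_+$ never leaves the region on which the slope bound applies. Under the hypothesis of the corollary, the bound ${\frak m}_f(x) \ge c$ holds on the entirety of $f^{-1}((0,+\infty))$, not merely on the portion outside some ball $\|x\| \ge R$. Consequently there is no need for a separate \L ojasiewicz-type estimate near $S$, no bifurcation into two cases, and no second (fractional) exponent: the coefficient $c_1$ of Theorem~\ref{GoodnessTheorem} can simply be taken to be the global slope constant $c$.

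For a given $x \in {\Bbb R}^n$ with $f(x) > 0$ I would introduce, exactly as in the proof of Theorem~\ref{GoodnessTheorem}, the absolutely continuous curve $u \colon [0,\infty) \to {\Bbb R}^n$ obtained from \cite[Corollary 4.1]{Bolte2007} with $u(0) = x$, for which
$$\|\dot{u}(s)\| \; = \; {\frak m}_{f_+}(u(s)) \quad \textrm{ and } \quad \frac{d}{ds}(f_+ \circ u)(s) \; = \; -[{\frak m}_{f_+}(u(s))]^2$$
for almost every $s$. As long as $f_+(u(s)) > 0$ one has ${\frak m}_{f_+}(u(s)) = {\frak m}_f(u(s)) \ge c$, and the same chain of inequalities used in Theorem~\ref{GoodnessTheorem} yields
$$f_+(u(t_1)) - f_+(u(t_2)) \; \ge \; c \,\|u(t_1) - u(t_2)\|$$
on any interval $[t_1, t_2]$ on which $f_+ \circ u$ is positive.

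Next I would argue that the trajectory in fact reaches $S$ in finite time. From the identity $\frac{d}{ds}(f_+ \circ u)(s) = -[{\frak m}_{f_+}(u(s))]^2 \le -c^2$, valid while $f_+ \circ u > 0$, one obtains $f_+(u(s)) \le f_+(x) - c^2 s$, so there is a smallest $s^* \in [0,\, f_+(x)/c^2]$ with $f_+(u(s^*)) = 0$; that is, $u(s^*) \in S$. Passing to the limit $t_2 \nearrow s^*$ in the key inequality, using the continuity of $u$ and of $f_+$, gives
$$[f(x)]_+ \; = \; f_+(u(0)) - f_+(u(s^*)) \; \ge \; c \,\| x - u(s^*)\| \; \ge \; c \,d(x, S).$$
The remaining case $f(x) \le 0$ is trivial since then $x \in S$ and $d(x,S) = 0$.

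The only genuine subtlety is justifying the passage to the endpoint $s^*$ and checking that the estimate survives this limit; everything else is a direct quotation of estimates already established for Theorem~\ref{GoodnessTheorem}. This is handled by the absolute continuity of $f_+ \circ u$ combined with the finite-length bound built into the key inequality, so I do not expect a serious obstacle.
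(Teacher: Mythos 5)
Your proposal is correct and follows essentially the same route as the paper, which obtains the corollary by running Case 1 of the proof of Theorem~\ref{GoodnessTheorem} with the slope bound now valid on all of $f^{-1}((0,+\infty))$, so that the \L ojasiewicz estimate near $S$ and the case distinction disappear and the descent inequality $f_+(u(t_1))-f_+(u(t_2))\ge c\|u(t_1)-u(t_2)\|$ alone yields $c\,d(x,S)\le [f(x)]_+$. The only (harmless) variation is that you reach a point of $S$ in finite time via $\frac{d}{ds}(f_+\circ u)\le -c^2$, whereas the paper lets $s\to\infty$ and uses that the limit point $a$ of the finite-length trajectory satisfies ${\frak m}_f(a)=0$ and hence $f_+(a)=0$; both arguments are valid.
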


Let $A \in \mathbb{R}^{n \times n}$ be a nonzero symmetric matrix. It is well-known that $A$ has $n$ real eigenvalues $\lambda_i, i = 1, \ldots, n.$ Then we make use of the following notation
$$\lambda(A) := \min \{ | \lambda_i| \ : \ \lambda_i \ne 0, i = 1, \ldots, n \}.$$
The following result gives a H\"{o}lder-type global error bound result for the zero set of a single quadratic function.

\begin{corollary}
Let $f(x) := \displaystyle\frac{1}{2} x^T Ax + x^T b + c$ be a quadratic function in $\mathbb{R}^n,$ where $A \in \mathbb{R}^{n \times n}$ is a nonzero symmetric matrix, $b$ is a vector in $\mathbb{R}^n$ and $c$ is a real number. Let $\bar x \in \mathbb{R}^n$ be such that $\nabla f(\bar x) = A \bar x + b = 0.$
Then we have for all $x \in \mathbb{R}^n,$
\begin{eqnarray*}
\frac{\sqrt{2 \lambda(A)}}{2} \ d(x, f^{-1}(f(\bar x))) & \le & | f(x) - f(\bar x)|^{\frac{1}{2}}.
\end{eqnarray*}
\end{corollary}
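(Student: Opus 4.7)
The plan is to apply Corollary~\ref{CorollaryGoodness} to the continuous semi-algebraic function $g(x) := |f(x) - f(\bar x)|^{1/2}$, whose zero set is exactly $S := f^{-1}(f(\bar x))$. With $c := \sqrt{\lambda(A)/2} = \tfrac{\sqrt{2\lambda(A)}}{2}$, verifying ${\frak m}_g(x) \ge c$ on $g^{-1}((0,+\infty))$ will immediately give
$$c\, d(x, S) \le [g(x)]_+ = g(x) = |f(x) - f(\bar x)|^{1/2},$$
which is exactly the claim.

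First I would Taylor-expand at the critical point $\bar x$: since $\nabla f(\bar x) = A\bar x + b = 0$, one has
$$f(x) - f(\bar x) = \tfrac{1}{2}(x - \bar x)^T A (x - \bar x), \qquad \nabla f(x) = A(x - \bar x).$$
On $g^{-1}((0, +\infty)) = \{x : (x-\bar x)^T A (x - \bar x) \ne 0\}$ the function $g$ is smooth (a composition of $C^\infty$ maps on each side of the zero level), so ${\frak m}_g(x) = \|\nabla g(x)\|$. Writing $y := x - \bar x$, a direct computation gives
$$\|\nabla g(x)\|^2 = \frac{\|A(x-\bar x)\|^2}{4\,|f(x) - f(\bar x)|} = \frac{\|Ay\|^2}{2\,|y^T A y|}.$$
Thus the problem reduces to the purely algebraic inequality
$$\|Ay\|^2 \;\ge\; \lambda(A)\,|y^T A y| \qquad \text{for all } y \in \mathbb{R}^n.$$

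To establish it, I would decompose $y = y_0 + y_1$ along the orthogonal sum $\ker A \oplus (\ker A)^\perp$ and diagonalize $A|_{(\ker A)^\perp}$ in an orthonormal basis $\{v_1, \ldots, v_r\}$ with nonzero eigenvalues $\mu_1, \ldots, \mu_r$. Writing $y_1 = \sum_{i=1}^r \alpha_i v_i$, one has $Ay = Ay_1 = \sum_i \alpha_i \mu_i v_i$ and $y^T A y = \sum_i \alpha_i^2 \mu_i$, hence
$$\|Ay\|^2 = \sum_{i=1}^r \alpha_i^2 \mu_i^2, \qquad |y^T A y| \;\le\; \sum_{i=1}^r \alpha_i^2 |\mu_i|.$$
Since every $|\mu_i| \ge \lambda(A)$ by the definition of $\lambda(A)$, one has $\mu_i^2 \ge \lambda(A)\,|\mu_i|$ term-by-term, and summing gives the required inequality.

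The one point that needs care is that when $A$ is indefinite the quadratic form $y^T A y$ can vanish on a nontrivial cone, so one must use the triangle-inequality bound $|y^T A y| \le \sum \alpha_i^2 |\mu_i|$ rather than attempting an identity like $|y^T A y| = y^T |A| y$. Apart from this piece of bookkeeping, the argument is routine, and I do not expect any further obstacle: Corollary~\ref{CorollaryGoodness} then closes the proof with the exact constant stated.
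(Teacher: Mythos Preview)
Your proposal is correct and follows essentially the same route as the paper: Taylor-expand at $\bar x$ to get $f(x)-f(\bar x)=\tfrac12(x-\bar x)^TA(x-\bar x)$ and $\nabla f(x)=A(x-\bar x)$, diagonalize $A$ to prove the gradient inequality $\|Ay\|^2\ge\lambda(A)\,|y^TAy|$ via the term-by-term bound $\mu_i^2\ge\lambda(A)|\mu_i|$ together with the triangle inequality $|y^TAy|\le\sum\alpha_i^2|\mu_i|$, and then apply Corollary~\ref{CorollaryGoodness} to $g(x)=|f(x)-f(\bar x)|^{1/2}$. The only cosmetic difference is that the paper first phrases the key estimate as $\sqrt{2\lambda(A)}\,|f(x)-f(\bar x)|^{1/2}\le\|\nabla f(x)\|$ before passing to $g$, whereas you go directly to ${\frak m}_g$.
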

\begin{proof}
We first show the following {\em gradient inequality} (see \cite[Property 6]{Forti2006}):
\begin{eqnarray*}
\sqrt{2 \lambda(A)}  \ | f(x) - f(\bar x)|^{\frac{1}{2}} 
& \le & \|\nabla f(x) \|, \quad \textrm{ for all } x \in \mathbb{R}^n.
\end{eqnarray*}
Indeed, by the assumption, we have $b = - A \bar x.$ This implies that $\nabla f(x) = Ax + b = A (x - \bar x) $ and
\begin{eqnarray*}
f(x) - f(\bar x) 
&=& \frac{1}{2} (x^T A x - \bar x^T A \bar x) + (x - \bar x)^Tb \\
&=& \frac{1}{2} (x^T A x - \bar x^T A \bar x) - (x - \bar x)^TA \bar x \\
&=& \frac{1}{2} (x - \bar x)^T A (x - \bar x).
\end{eqnarray*}

On the other hand, since the matrix $A$ is symmetric, $A$ has $n$ real eigenvalues $\lambda_i, i = 1, \ldots, n.$ Then we can write
$$A = R \ \mathrm{diag}(\lambda_1, \ldots, \lambda_\kappa, 0, \ldots, 0) R^T,$$
where $R  \in \mathbb{R}^{n \times n}$ is a suitable orthonormal matrix, and $\lambda_i, i = 1, \ldots, \kappa,$ is the nonzero eigenvalues of $A.$

Now let $x\in\mathbb{R}^{n}$ and set $z := R^T(x - \bar x) \in \mathbb{R}^{n}.$ We have
$$\| \nabla f(x) \|^2 = \| A (x - \bar x)\|^2 = \sum_{i = 1}^\kappa \lambda_i^2 z_i^2$$ 
and
\begin{eqnarray*}
| f(x) - f(\bar x) |
&=& \frac{1}{2} \left | (x - \bar x)^T A (x - \bar x) \right | 
\ = \ \frac{1}{2}  \left | \sum_{i = 1}^\kappa \lambda_i z_i^2 \right| 
\ \le \ \frac{1}{2}  \sum_{i = 1}^\kappa | \lambda_i | z_i^2.
\end{eqnarray*}
which implies the gradient inequality.

We now consider the continuous semi-algebraic function $g \colon \mathbb{R}^n \rightarrow \mathbb{R}$ defined by 
$g(x) := |f(x) - f(\bar x)|^{\frac{1}{2}}.$ Then it easily follows from the above gradient inequality that 
${\frak m}_{g}(x) \ge \frac{\sqrt{2 \lambda(A)}}{2}$ for any $x$ with
$g(x) > 0.$
This, together with Corollary \ref{CorollaryGoodness}, implies the desired result.
\end{proof}

\begin{remark}{\rm
Recall that the authors of \cite{Luo2000-1} (see also \cite{Dinh2012}) established the following H\"{o}lder-type global error bound result for the zero set of a single quadratic function $f$: There exists some constant $c > 0$ such that
$$c d(x, f^{-1}(0)) \le |f(x)|^{\frac{1}{2}} + |f(x)|, \quad \textrm{ for all }  \quad x \in \mathbb{R}^n.$$
However,  neither result gives  any  clue  for  computing  the  constant $c$  in general.
}\end{remark}

\section{Proof of the main result} \label{SectionHolder}



The following lemmas are crucially used in the proof of Theorem \ref{HolderTypeTheorem}.

\begin{lemma} \label{Lemma7}
Let $F = (f_1, \ldots, f_p) \colon {\Bbb R}^n \rightarrow {\Bbb R}^p$ be a map of class $C^1$ and let $f(x) := \max_{i = 1, \ldots, p} f_i(x).$ Then $f$ is a continuous function and
\begin{eqnarray*}
{\frak m}_f(x) &=& \min_{
\begin{matrix}
\lambda_i \ge  0,\ \sum_{i \in I} \lambda_i  = 1
\end{matrix}}   \left \| \sum_{i \in I} \lambda_i \nabla f_i(x) \right \|,
\end{eqnarray*}
where $I=I(x):= \{i \ : \ f_i(x) = f(x)\}.$
\end{lemma}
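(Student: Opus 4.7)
The plan is to prove continuity of $f$ as the pointwise maximum of finitely many continuous functions (routine), and then to establish the stronger identity
\[
\partial f(x) \;=\; \mathrm{conv}\{\nabla f_i(x) \,:\, i \in I(x)\},
\]
from which the claim for $\mathfrak{m}_f(x)$ is immediate (the minimum norm element of a nonempty compact convex set).

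First I would compute the Fréchet subdifferential $\hat{\partial} f(x)$. For the inclusion $\mathrm{conv}\{\nabla f_i(x) : i \in I(x)\} \subseteq \hat{\partial} f(x)$, I take $v = \sum_{i \in I} \lambda_i \nabla f_i(x)$ with $\lambda_i \geq 0$ and $\sum_{i\in I}\lambda_i = 1$, and use the elementary bound
\[
f(x+h) - f(x) \;\geq\; \sum_{i \in I} \lambda_i \bigl(f_i(x+h) - f_i(x)\bigr),
\]
which is valid because $f_i(x)=f(x)$ for $i\in I$ and $f(x+h)\ge f_i(x+h)$ for each $i$; the $C^{1}$ expansion of each $f_i$ then yields $f(x+h)-f(x) \geq \langle v, h\rangle + o(\|h\|)$.

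For the reverse inclusion, assume $v \in \hat{\partial} f(x)$ but $v \notin K := \mathrm{conv}\{\nabla f_i(x) : i \in I(x)\}$. By the Hahn--Banach separation theorem, there exists a unit vector $h \in \mathbb{R}^n$ with $\langle v, h \rangle > \max_{i \in I} \langle \nabla f_i(x), h \rangle$. Since $f_j(x) < f(x)$ for $j \notin I$, continuity ensures that for all small $t>0$ the maximum in $f(x+th)$ is attained only on $I$, so
\[
f(x+th) - f(x) \;=\; \max_{i \in I} \bigl(t\langle \nabla f_i(x), h\rangle + o(t)\bigr) \;=\; t\max_{i \in I} \langle \nabla f_i(x), h\rangle + o(t),
\]
which produces a negative liminf in the definition of $\hat{\partial} f(x)$ applied at direction $h$, a contradiction. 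Hence $\hat{\partial} f(x) = K$.

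Finally I would lift this to the limiting subdifferential $\partial f(x)$. The key observation is the upper semicontinuity of the active-index map: if $x^k \to x$ and $i \in I(x^k)$ for infinitely many $k$, then by continuity $f_i(x) = f(x)$, so $i \in I(x)$; since there are only finitely many indices, eventually $I(x^k) \subseteq I(x)$. Any cluster point $v$ of a sequence $v^k \in \hat{\partial} f(x^k)$ is then a limit of convex combinations $\sum_{i \in I(x)} \lambda_i^k \nabla f_i(x^k)$, and compactness of the simplex together with continuity of $\nabla f_i$ places $v$ in $K$. This shows $\partial f(x) \subseteq K \subseteq \hat{\partial} f(x) \subseteq \partial f(x)$, so all are equal, and the formula for $\mathfrak{m}_f(x)$ follows by taking the minimum norm element. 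The main obstacle is the separation argument in the Fréchet step; once that is in place, the passage to the limiting subdifferential is essentially bookkeeping via upper semicontinuity of $I$.
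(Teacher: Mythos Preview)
Your argument is correct and self-contained; the paper, by contrast, does not prove this lemma directly but simply invokes \cite[Theorem~3.46(ii)]{Mordukhovich2006} (with a cross-reference to \cite[Exercise~8.31]{Rockafellar1998}), which computes the limiting subdifferential of a finite maximum of smooth functions in full generality. Your route is more elementary in that it avoids the machinery of those references: you compute $\hat{\partial} f(x)$ directly via a one-sided Taylor estimate for the inclusion $K\subseteq\hat{\partial} f(x)$ and a Hahn--Banach separation for the reverse, and then pass to $\partial f(x)$ using only the finiteness of the index set (upper semicontinuity of $I(\cdot)$) together with compactness of the simplex and continuity of the gradients. What the paper's citation buys is brevity and a pointer to a result that covers far more general settings (e.g.\ non-$C^1$ components, Banach spaces); what your approach buys is transparency and independence from those texts, at the cost of writing out a standard but slightly longer argument. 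Both lead to $\partial f(x)=\mathrm{conv}\{\nabla f_i(x):i\in I(x)\}$, after which the formula for $\mathfrak{m}_f(x)$ is immediate since this set is nonempty, convex, and compact.
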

\begin{proof}
The statement is a consequence of \cite[Theorem~3.46(ii)]{Mordukhovich2006} (cf. also \cite[Exercise~8.31]{Rockafellar1998}).
\end{proof}

\begin{lemma} \label{Lemma8}
Under the assumptions of Theorem \ref{HolderTypeTheorem}, there exist some constants $c > 0$ and $R > 0$ such that
$${\frak m}_f(x) \ge c \quad \textrm{ for all } \quad \|x\| \ge R.$$
In particular, the function $f$ is good at infinity.
\end{lemma}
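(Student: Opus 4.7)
My plan is to argue by contradiction, combining Lemma \ref{Lemma7} with the Curve Selection Lemma at infinity (Lemma \ref{Lemma1}) and the non-degeneracy hypothesis on $F$. Suppose there is a sequence $x^k$ with $\|x^k\| \to \infty$ and $\mathfrak{m}_f(x^k) \to 0$. Lemma \ref{Lemma7} yields convex multipliers $\lambda_i^k \ge 0$, $\sum_{i \in I(x^k)} \lambda_i^k = 1$, supported on the active set $I(x^k) = \{i : f_i(x^k) = f(x^k)\}$, with $\|\sum_i \lambda_i^k \nabla f_i(x^k)\| = \mathfrak{m}_f(x^k) \to 0$. After extraction I may assume $I(x^k) \equiv I$ is constant, and applying Lemma \ref{Lemma1} to the semi-algebraic set
\[A := \bigl\{ (x, \lambda) \in \mathbb{R}^n \times \mathbb{R}^p : f_i(x) = f_j(x)\ \forall i,j \in I;\ \lambda_i \ge 0;\ \textstyle\sum_i \lambda_i = 1;\ \lambda_i = 0\ \forall i \notin I \bigr\}\]
produces a smooth semi-algebraic arc $(\varphi(t), \lambda(t)) \in A$ on $(0, \epsilon)$ with $\|\varphi(t)\| \to \infty$ and $\sum_i \lambda_i(t) \nabla f_i(\varphi(t)) \to 0$ as $t \to 0^+$.

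The heart of the argument is a Puiseux-expansion analysis. Write $\varphi_j(t) = x_j^0 t^{q_j} + \mathrm{h.o.t.}$ for $j$ in the active coordinate set $J := \{j : \varphi_j \not\equiv 0\}$ (with $x_j^0 \ne 0$), and $\lambda_i(t) = \lambda_i^0 t^{r_i} + \mathrm{h.o.t.}$ (with $\lambda_i^0 > 0,\ r_i \ge 0$); since $\|\varphi(t)\| \to \infty$, $\min_{j \in J} q_j < 0$, and the set $I^* := \{i \in I : r_i = 0\}$ satisfies $\sum_{i \in I^*} \lambda_i^0 = 1$. Extending $q$ to $\mathbb{R}^n$ by large positive values on coordinates outside $J$, set $\Delta_i := \Delta(q, \Gamma(f_i))$ and $\Delta := \Delta_1 + \cdots + \Delta_p$. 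Convenience of each $f_i$ forces $d_i := d(q, \Gamma(f_i)) < 0$ (hence $\Delta_i \in \Gamma_\infty(f_i)$ and $\Delta_i \subset \mathbb{R}^J$), and by Lemma \ref{Lemma3}, $\Delta \in \Gamma_\infty(F)$. A key geometric consequence of convenience is the bound $q_k \ge d_i$ for every $k \in J$ and every $i$: some vertex $m_k e_k \in \Gamma(f_i)$ with $m_k \ge 1$ gives $d_i \le m_k q_k$, so $q_k \ge d_i/m_k \ge d_i$ (the crucial case being $q_k < 0$). Extracting leading coefficients of
\[f_i(\varphi(t)) = f_{i, \Delta_i}(x_J^0) t^{d_i} + \mathrm{h.o.t.}, \quad \partial_{x_k} f_i(\varphi(t)) = \partial_{x_k} f_{i, \Delta_i}(x_J^0) t^{d_i - q_k} + \mathrm{h.o.t.}\ (k \in J),\]
and matching leading terms in $f_i(\varphi(t)) \equiv f_j(\varphi(t))$ along the curve synchronizes the relevant $d_i$'s to a common value $d^*$ and the leading coefficients $f_{i, \Delta_i}(x_J^0)$ to a common value $a$ across those $i \in I^*$ with $f_{i,\Delta_i}(x_J^0)\ne 0$. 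The gradient condition $\sum_i \lambda_i(t)\, \partial_{x_k} f_i(\varphi(t)) \to 0$, combined with $q_k \ge d^*$ (so that $d^* - q_k \le 0$ and the natural leading coefficient is forced to cancel), yields $\sum_{i \in I^*} \lambda_i^0\, \partial_{x_k} f_{i, \Delta_i}(x_J^0) = 0$ for each $k \in J$. Multiplying by $q_k x_k^0$, summing over $k \in J$, and invoking Euler's identity $\sum_k q_k x_k \partial_{x_k} f_{i, \Delta_i}(x) = d_i f_{i, \Delta_i}(x)$ (quasi-homogeneity of the principal parts) then gives $d^* a = 0$, hence $a = 0$. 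Consequently $\sum_i \lambda_i^0 \cdot \mathrm{row}_i(M_\Delta)$, evaluated at $(x_J^0, x^*)$ for any $x^* \in (\mathbb{R}\setminus\{0\})^{\{1,\ldots,n\}\setminus J}$, is the zero vector --- a non-trivial null combination --- contradicting $\mathrm{rank}\,M_\Delta = p$ required by non-degeneracy at infinity.

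The main technical obstacle I foresee is the leading-order book-keeping when some $f_{i, \Delta_i}(x_J^0) = 0$ at the outset, so the ``natural'' leading exponents of different $f_i(\varphi(t))$'s genuinely diverge. In that case one has to refine $I^*$ to the subcollection actually driving the leading order of $f(\varphi(t))$ (and, if necessary, replace $\Delta$ by an appropriate sub-face) so that the concluding null-combination of rows of $M_\Delta$ still has non-trivial weights $\lambda_i^0$. Once the uniform bound $\mathfrak{m}_f(x) \ge c$ for $\|x\| \ge R$ is secured, the ``in particular'' claim that $f$ is good at infinity is immediate by restriction to $f^{-1}((0, +\infty))$.
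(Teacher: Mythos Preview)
Your strategy differs from the paper's at the endgame. After the Puiseux set-up, the paper extracts the leading-order cancellation in the gradient sum over the set $I':=\{i:d_i+r_i=\ell\}$ with $\ell:=\min_i(d_i+r_i)$, and then splits into two cases according to whether $\ell\le q_{j_*}:=\min_{j\in J}q_j$ or $\ell>q_{j_*}$ (the second case is exactly your convenience bound $q_k\ge d_i$ in disguise). In the main case the paper does \emph{not} try to kill all $f_{i,\Delta_i}(x^0)$: the rank deficiency on the first $n$ columns of $M_\Delta$ together with non-degeneracy produces some $i_0\in I'$ with $f_{i_0,\Delta_{i_0}}(x^0)\ne 0$, and then the paper differentiates $f\circ\varphi$ along the curve (using $f(\varphi(t))=f_{i_0}(\varphi(t))$ from the active-set condition, which is why the paper's $A$ includes the strict inequalities for $i\notin\tilde I$) to bound $\mathfrak m_f(\varphi(t))\gtrsim t^{d_{i_0}-q_{j_*}}$, contradicting $\mathfrak m_f\to 0$. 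Your Euler-identity route is more algebraic and neatly sidesteps both the case split and the curve-differentiation trick.

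There is, however, a real gap in your book-keeping, and it is not the one you flag at the end. The lowest-order term of $\sum_i\lambda_i(t)\,\partial_{x_k}f_i(\varphi(t))$ sits at exponent $\ell-q_k$ and is carried by $I'$, \emph{not} by your $I^*=\{i:r_i=0\}$; these sets differ whenever the $d_i$ are unequal, and nothing synchronizes $d_i$ for those $i$ with $f_{i,\Delta_i}(x^0)=0$ (your ``common $d^*$'' is only defined on the complementary subcollection). Your convenience bound does give $q_k\ge d_{i_*}\ge\ell$ for any $i_*\in I^*$, so the forced cancellation is $\sum_{i\in I'}\lambda_i^0\,\partial_{x_k}f_{i,\Delta_i}(x^0)=0$. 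With $I'$ in place of $I^*$ your Euler step then goes through cleanly: multiplying by $q_kx_k^0$ and summing gives $\sum_{i\in I'}\lambda_i^0\,d_i\,f_{i,\Delta_i}(x^0)=0$; the active-set equalities $f_i(\varphi(t))\equiv f_j(\varphi(t))$ force a common pair $(d^*,a)$ on those $i\in I'$ with $f_{i,\Delta_i}(x^0)\ne 0$, so the sum reduces to $d^*a\sum\lambda_i^0$ with $d^*<0$ and $\lambda_i^0>0$, whence every $f_{i,\Delta_i}(x^0)=0$ on $I'$ and you get the nontrivial null row-combination of $M_\Delta$. So your approach is salvageable, but the index set driving the contradiction must be $I'$, not $I^*$.
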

\begin{proof}
Suppose that by contradiction there exists a sequence $\{x^k\}_{k \in {\Bbb N}} \subset {\Bbb R}^n$ such that
$$\lim_{k \to \infty} \|x^k \|= \infty \quad \quad \textrm{ and } \quad \lim_{k \to \infty} {\frak m}_f(x^k) = 0.$$
By definition, there exists a sequence $\lambda^k:=(\lambda_1^k,\ldots,\lambda_p^k)$ with $\lambda_i^k \ge  0,\ \sum_{i \in I(x^k)} \lambda_i^k=1$ such that 
$${\frak m}_f(x^k) =\left \| \sum_{i \in I(x^k)} \lambda_i^k \nabla f_i(x^k) \right \|.$$
Since the number of subsets of $\{1,\ldots,p\}$ is finite, by taking subsequences if necessary, we may assume that the set $I(x^k)$ is stable, i.e., there exists $\tilde I\subseteq \{1,\ldots,p\}$ such that $\tilde I=I(x^k)$ for all $k$.
We remark that the function ${\frak m}_f(x)$ is semi-algebraic. 
By Lemma \ref{Lemma7} and by applying Curve Selection Lemma at infinity (Lemma \ref{Lemma1}) with the following setup: the set
\begin{eqnarray*}
A:=\{(x,\lambda)\in\Bbb R^n\times\Bbb R^p:&&\lambda_i \ge  0, \ \sum_{i \in \tilde I} \lambda_i=1, \\
										  &&f_i(x)=f(x) \text { for } i\in\tilde I,\\
										  &&f_i(x)<f(x) \text { for } i\not\in\tilde I,\\
									 	  &&{\frak m}_f(x) =||\sum_{i \in \tilde I} \lambda_i \nabla f_i(x)||\}
\end{eqnarray*}
which is a semi-algebraic set, the sequence $(x^k,\lambda^k)\in A$ which tends to infinity as $k\to\infty,$ and the semi-algebraic function $x\mapsto {\frak m}_f(x),$
it follows that there exist a smooth semi-algebraic curve $\varphi(t) := (\varphi_1(t), \ldots, \varphi_n(t))$ and some smooth semi-algebraic functions $\lambda_i(t), i \in \tilde{I},$ for $0 < t \ll 1,$ such that
\begin{enumerate}
  \item [(a)] $\lim_{t \to 0} \| \varphi(t)\| = \infty;$
  \item [(b)] $f_i(\varphi(t)) = f(\varphi(t))$ for $i \in \tilde{I},$ and $f_i(\varphi(t)) < f(\varphi(t))$ for $i \not \in \tilde{I};$
  \item [(c)] $\lambda_i(t) \ge 0$ for all $i \in \tilde{I},$ and $\sum_{i \in \tilde{I}}\lambda_i(t) = 1.$
  \item [(d)] ${\frak m}_f(\varphi(t)) = \|\sum_{i \in \tilde{I}} \lambda_i(t) \nabla f_i(\varphi(t))\| \to 0$ as $t \to 0.$
\end{enumerate}

Let $J := \{j  : \ \varphi_j \not\equiv 0\}.$ By Condition (a),
$J \ne \emptyset.$ In view of Growth Dichotomy Lemma (Lemma \ref{GrowthDichotomyLemma}), for $j \in J,$ we can expand the coordinate $\varphi_j$ in terms of the parameter: say
$$\varphi_j(t) =  x_j^0 t^{q_j} + \textrm{ higher order terms in } t,$$
where $x_j^0 \ne 0.$ From Condition (a), we get $q_{j_*} := \min_{j \in J} q_j  < 0$ for some $j_* \in J.$ Note that $\|\varphi(t)\| = c t^{q_{j_*}} + o(t^{q_{j_*}})$ as $t \to 0,$ for some $c > 0.$

Since $f_i$ is convenient, $\Gamma(f_i) \cap {\Bbb R}^J \ne \emptyset.$  Let $d_i$ be the minimal value of the linear function $\sum_{j \in J}
q_j \kappa_j$ on $\Gamma(f_i) \cap {\Bbb R}^J,$ and let $\Delta_i$ be the (unique)
maximal face of $\Gamma(f_i) \cap {\Bbb R}^J$ where the linear function takes this
value. Since $f_i$ is convenient, $d_i < 0$ and $\Delta_i$ is a face of $\Gamma_\infty(f_i).$
Note that $f_{i, \Delta_i}$ does not dependent on $x_j$ for all $j \not \in J.$
By a direct calculation, then
$$f_i(\varphi(t)) = f_{i, \Delta_i}(x^0)t^{d_i} + \textrm{ higher order terms in } t,$$
where $x^0 := (x_1^0, \ldots, x_n^0)$ with $x_j^0 = 1$ for $j \not \in J.$

Let $I := \{i \in \tilde{I} \ : \ \lambda_i \not \equiv 0 \}.$ It follows from
Condition (c) that $I \ne \emptyset.$ For $i \in I,$ expand the coordinate $\lambda_i$ in terms of the parameter: say
$$\lambda_i(t) =  \lambda_i^0 t^{\theta_i} + \textrm{ higher order terms in } t,$$
where $\lambda_i^0 \ne 0.$

For $i \in I$ and $j \in J$ we have
\begin{eqnarray*}
\frac{\partial f_i}{\partial x_j}(\varphi(t))
&=& \frac{\partial f_{i, \Delta_i}}{\partial x_j}(x^0)t^{d_i - q_j}  + \textrm{ higher order terms in } t.
\end{eqnarray*}
It implies that
\begin{eqnarray*}
\sum_{i \in I} \lambda_i(t) \frac{\partial f_i}{\partial x_j}(\varphi(t))
&=& \sum_{i \in I} \left( \lambda_i^0  \frac{\partial f_{i, \Delta_i}}{\partial x_j}(x^0)t^{d_i + \theta_i - q_j}  + \textrm{ higher order terms in } t \right) \\
&=& \left( \sum_{i \in I'} \lambda_i^0  \frac{\partial f_{i, \Delta_i}}{\partial x_j}(x^0) \right) t^{\ell  - q_j}  + \textrm{ higher order terms in } t,
\end{eqnarray*}
where $\ell  := \min_{i \in I} (d_i + \theta_i)$ and $I' := \{i \in I \ : \ d_i + \theta_i = \ell \} \ne \emptyset.$

There are two cases to be considered.

\subsubsection*{Case 1: $\ell  \le q_{j_*}  := \min_{j \in J} q_j$} \

We deduce from Condition (d) that
$$\sum_{i \in I'} \lambda_i^0  \frac{\partial f_{i, \Delta_i}}{\partial x_j}(x^0) = 0, \quad \textrm{for all } \quad j \in J,$$
which yields
$$\sum_{i \in I'} \lambda_i^0  \frac{\partial f_{i, \Delta_i}}{\partial x_j}(x^0) = 0, \quad \textrm{for all } \quad j = 1, 2, \dots, n,$$
because $f_{i, \Delta_i}$ does not dependent on $x_j$ for all $j \not \in J.$ It implies easily that
\begin{equation*}
\textrm{rank}
\begin{pmatrix}
x_j^0\frac{\partial f_{i, \Delta_i}}{\partial x_j}(x^0)
\end{pmatrix}_{i \in I', 1 \le j \le n} < \# I'.
\end{equation*}
Since the map $F = (f_1, \ldots, f_p)$ is non-degenerate at infinity, there exists an index $i_0 \in I'$ such that $f_{i_0, \Delta_{i_0}}(x^0) \ne 0.$ Then, by Condition (b), we have for all $i \in \tilde{I},$
$$f(\varphi(t)) = f_i(\varphi(t)) = f_{i_0}(\varphi(t)) = f_{i_0, \Delta_{i_0}}  (x^0) t^{d_{i_0}} + \textrm{ higher order terms in } t.$$
By taking the derivative in $t$ of the function $(f \circ \varphi)(t)$, we deduce that
\begin{eqnarray*}
\frac{d (f \circ \varphi)(t)}{dt} &=& \frac{d (f_i \circ \varphi)(t)}{dt} = \left \langle \nabla f_i(\varphi(t)), \frac{d \varphi(t)}{dt} \right \rangle,
\quad \textrm{for all } \quad i \in \tilde{I}.
\end{eqnarray*}
By Condition (c), then
\begin{eqnarray*}
\frac{d (f \circ \varphi)(t)}{dt}  \ = \ \sum_{i \in \tilde{I}} \lambda_i(t) \frac{d (f \circ \varphi)(t)}{dt} &=& \left \langle \sum_{i \in \tilde{I}} \lambda_i(t) \nabla f_i(\varphi(t)), \frac{d \varphi(t)}{dt} \right \rangle.
\end{eqnarray*}
Thus
\begin{eqnarray*}
\left |  \frac{d (f \circ \varphi)(t)}{dt} \right | 
& \le &  \Big\| \sum_{i \in \tilde{I}} \lambda_i(t) \nabla f_i(\varphi(t)) \Big \| \left \| \frac{d \varphi(t)}{dt} \right \| \  = \  {\frak m}_f(\varphi(t)) \left \| \frac{d \varphi(t)}{dt} \right \|,
\end{eqnarray*}
which implies that
$${\frak m}_f(\varphi(t)) \ge c't^{d_{i_0} - q_{j_*}}  + \textrm{ higher order terms in } t,$$
for some $c' > 0.$ But this inequality contradicts Condition (d) since we know that
$$d_{i_0} \le d_{i_0} + \theta_{i_0} = \ell  \le q_{j_*}.$$

\subsubsection*{Case 2: $\ell  > q_{j_*}  := \min_{j \in J} q_j$} \

It follows from Condition (c) that  $\theta_i \ge 0$ for all $i \in I$ and $\theta_i = 0$ for some $i \in I.$ Without lost of generality, we may assume that $1 \in I$ and $\theta_1 =  0.$ Since $f_1$ is convenient, for any $j = 1, \ldots, n,$ there exists a natural number $m_j \ge 1$ such that $m_j e_j \in \Gamma_\infty(f_1).$ Then it is clear that
$$q_j m_j \ge d_1, \quad \textrm{ for all } j \in J.$$
On the other hand, we have
$$d_1 = d_1 + \theta_1 \ge \min_{i \in I} (d_i + \theta_i) = \ell.$$
Therefore
$$q_{j_*} m_{j_*} \ge d_1 \ge \ell  > q_{j_*}.$$
Since $q_{j_*}  = \min_{j \in J} q_j   < 0,$ it implies that $m_{j_*} < 1$, which is a contradiction.
\end{proof}

\begin{corollary} Under the assumptions of Theorem \ref{HolderTypeTheorem}, there exist some positive constants $c,\delta$ and $\alpha$ such that the following H\"older-type error bound ``near to $S:=\{x\in\Bbb{R}^n:{\frak m}_f(x)=0\}$" holds
$$cd(x,S)\le [{\frak m}_f(x)]^\alpha \quad \textrm{ for all } \quad x \in \ {\frak m}_f\le \delta\}.$$
\end{corollary}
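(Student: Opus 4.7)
The strategy is to combine Lemma~\ref{Lemma8} with the classical \L ojasiewicz inequality for continuous semi-algebraic functions on compact sets. The crucial input is that Lemma~\ref{Lemma8} forces every sufficiently small sublevel set of the nonsmooth slope ${\frak m}_f$ to be bounded, which reduces the desired H\"older bound to a local one on a closed ball.

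First I would invoke Lemma~\ref{Lemma8} to extract constants $c_1 > 0$ and $R > 0$ such that ${\frak m}_f(x) \ge c_1$ whenever $\|x\| \ge R.$ Setting $\delta := c_1/2,$ any $x$ with ${\frak m}_f(x) \le \delta$ must satisfy $\|x\| < R,$ so
$$\{x \in \R^n \ : \ {\frak m}_f(x) \le \delta \} \ \subseteq \ \overline{B(0,R)},$$
and in particular $S = \{x \in \R^n \ : \ {\frak m}_f(x) = 0\} \subseteq \overline{B(0,R)}.$ In the trivial case $S = \emptyset,$ continuity of ${\frak m}_f$ on the compact ball $\overline{B(0,R)}$ gives a positive minimum $c_0;$ further shrinking $\delta$ below $c_0$ makes the sublevel set empty, so the conclusion holds vacuously.

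Assuming $S \ne \emptyset,$ both ${\frak m}_f$ and the distance function $x \mapsto d(x,S)$ are continuous semi-algebraic functions on $\R^n$ (the former by Tarski-Seidenberg, as noted in Section~\ref{SectionPalaisSmale}; the latter as recalled among the basic properties in Section~\ref{SectionPreliminary}), and their zero sets coincide with $S$ on the compact set $\overline{B(0,R)}.$ The classical \L ojasiewicz inequality for continuous semi-algebraic functions on compact sets (see, e.g., \cite{Bochnak1998}) then yields constants $c_2 > 0$ and $\alpha > 0$ with
$$d(x,S) \ \le \ c_2 \, [{\frak m}_f(x)]^\alpha \quad \textrm{ for all } \quad x \in \overline{B(0,R)}.$$
Since $\{{\frak m}_f \le \delta\} \subseteq \overline{B(0,R)},$ this already gives the corollary with $c := 1/c_2.$

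I anticipate no serious obstacle: the only way a H\"older-type bound of this form could fail is through pathological decay of ${\frak m}_f$ at infinity, and Lemma~\ref{Lemma8} is precisely the statement that this does not occur under the convenience and non-degeneracy hypotheses. Once localized, the corollary is a routine application of the compact-set \L ojasiewicz inequality to the semi-algebraic pair $({\frak m}_f, d(\cdot,S)).$
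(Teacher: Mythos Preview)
Your proposal is correct and follows essentially the same route as the paper. The paper's proof also starts from Lemma~\ref{Lemma8} to rule out any sequence $x^k \to \infty$ with ${\frak m}_f(x^k) \to 0$, and then invokes Lemma~\ref{near} (applied to the semi-algebraic function ${\frak m}_f$) to obtain the H\"older bound; you simply unpack that second step into a direct appeal to the classical \L ojasiewicz inequality on the compact ball $\overline{B(0,R)}$, which is precisely what underlies Lemma~\ref{near} in this situation. One small caution: you appeal to continuity of ${\frak m}_f$, but the paper only records that ${\frak m}_f$ is semi-algebraic; for a max of $C^1$ functions the slope is in general only lower semicontinuous, so in the $S = \emptyset$ case and in citing the \L ojasiewicz inequality you should either argue via lower semicontinuity (an lsc semi-algebraic function attains its infimum on a compact set) or use a version of \L ojasiewicz that does not require continuity---the paper's own use of Lemma~\ref{near} has the same implicit issue.
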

\begin{proof} By Lemma \ref{Lemma8}, there exist some constants $c_1 > 0$ and $R > 0$ such that
$${\frak m}_f(x) \ge c_1 \quad \textrm{ for all } \quad \|x\| \ge R.$$
Hence there is no sequence $x^k\to\infty$ such that ${\frak m}_f(x)\to 0$. By Lemma \ref{near}, there exist some constants $c > 0, \delta > 0,$ and $\alpha > 0$ such that
$$cd(x,S)\le [{\frak m}_f(x)]^\alpha \quad \textrm{ for all } \quad x \in \ {\frak m}_f\le \delta.$$
\end{proof}

\begin{remark}{\rm
Lemma \ref{Lemma8} was proved by another method by H\`a \cite{HaHV2013} (see also \cite[Proposition 3.4]{Broughton1988}) for a single polynomial function; i.e., for the case where $p = 1.$ 
}\end{remark}

Before proving Theorem \ref{HolderTypeTheorem} which establishes that a H\"{o}lder-type global error bound holds
with an explicit exponent, we recall an error bound result on a bounded region.

\begin{lemma}\label{alpha}
Let $S$ denote the set of $x$ in ${\Bbb R}^n$ satisfying $f_1(x) \le 0, \ldots, f_p(x) \le 0,$ where each $f_i$ is a real polynomial.
Let $R$ be a positive number such that $S$ contains an element $x$ with $\|x\| \le R.$ Then, there exists a constant $c > 0$ such that
\begin{equation} \label{HolderExponent}
c  d(x, S)  \le [f(x)]_+^{\frac{2}{\mathcal{H}(2d, n, p)}} \quad \textrm{ for all } x \textrm{ with } \|x\| \le R.
\end{equation}
Here $f(x) := \max_{i = 1, \ldots, p} f_i(x)$ and $d := \max_{i = 1, \ldots, p} \deg f_i.$
\end{lemma}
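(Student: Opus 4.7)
The plan is to use the slack-variable trick to turn the inequality system into a system of polynomial \emph{equations}, and then invoke an effective (quantitative) \L ojasiewicz inequality on a bounded set. Concretely, I would set
$$\tilde f_i(x, y) \ := \ f_i(x) + y_i^2, \quad i = 1, \ldots, p,$$
with $y = (y_1, \ldots, y_p) \in \mathbb R^p,$ so that each $\tilde f_i$ is a polynomial in $n + p$ variables of degree $\le 2d,$ and the algebraic set $\tilde S := \{(x,y) \in \mathbb R^{n+p} : \tilde f_1 = \cdots = \tilde f_p = 0\}$ projects onto $S.$ The number of variables $n+p$ is exactly what will produce the exponent $(n+p-1)$ appearing in $\mathcal H(2d, n, p).$

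For $x$ with $\|x\| \le R,$ I would then define $y^*(x) := \bigl(\sqrt{[-f_1(x)]_+}, \ldots, \sqrt{[-f_p(x)]_+}\bigr).$ A direct computation gives $\tilde f_i(x, y^*(x)) = [f_i(x)]_+ \ge 0,$ hence $\max_i |\tilde f_i(x, y^*(x))| = [f(x)]_+.$ Since $|f_i|$ is bounded on $\{\|x\| \le R\}$ by a constant depending only on $R$ and the $f_i,$ the point $(x, y^*(x))$ lies in a fixed compact set $\widetilde K \subset \mathbb R^{n+p}.$ Moreover, if $(x', y') \in \tilde S$ is a closest point to $(x, y^*(x)),$ then $f_i(x') = -(y'_i)^2 \le 0$ shows $x' \in S,$ whence
$$d(x, S) \ \le \ \|x - x'\| \ \le \ \bigl\|(x, y^*(x)) - (x', y')\bigr\| \ = \ d\bigl((x, y^*(x)), \tilde S\bigr).$$

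To finish, I would apply an effective \L ojasiewicz inequality of Koll\'ar--Kurdyka--Spodzieja--Solerno type on $\widetilde K$ to the polynomial system $\tilde f_1, \ldots, \tilde f_p$ of degree $\le 2d$ in $n + p$ variables. Since $S$ meets $\{\|x\| \le R\}$ by hypothesis, $\tilde S$ meets $\widetilde K,$ and one obtains a constant $c_1 > 0$ with
$$c_1 \, d\bigl((x, y^*(x)), \tilde S\bigr)^{\mathcal H(2d, n, p)/2} \ \le \ \max_i |\tilde f_i(x, y^*(x))| \ = \ [f(x)]_+$$
for every $x$ with $\|x\| \le R.$ Combined with the previous paragraph, this yields \eqref{HolderExponent}.

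The main obstacle --- really the only substantive step --- is matching the \L ojasiewicz exponent with $\mathcal H(2d, n, p)/2 = d(12d - 3)^{n + p - 1}.$ This requires invoking the effective \L ojasiewicz inequality in exactly the form whose exponent is $D(6D - 3)^{N_0 - 1}/2$ for a polynomial system of degree $\le D$ in $N_0$ variables. The factor $2d$ (rather than $d$) is forced by the quadratic slack terms $y_i^2,$ and the exponent $n + p - 1$ precisely reflects the ambient dimension after introducing $p$ slack variables; the bookkeeping of constants and the verification that $\widetilde K$ can be chosen uniformly in $x$ are routine and bounded only by the sup-norm of each $|f_i|$ on the ball of radius $R.$
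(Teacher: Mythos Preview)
The paper does not prove this lemma at all: its ``proof'' is a one-line citation to \cite[Corollary~3.8]{LMP2013}. So there is no in-paper argument to compare your proposal against.

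That said, your slack-variable reduction is exactly the standard device used in \cite{LMP2013} to pass from inequality systems to equality systems, and your bookkeeping is correct: the substitution $\tilde f_i(x,y)=f_i(x)+y_i^2$ raises the degree to at most $2d$ and the ambient dimension to $n+p$, which is precisely why the constant $\mathcal H(2d,n,p)$ rather than $\mathcal H(d,n,p)$ appears. The chain $d(x,S)\le d((x,y^*(x)),\tilde S)$ and the identity $\max_i|\tilde f_i(x,y^*(x))|=[f(x)]_+$ are both fine.

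The one point you correctly flag as the ``main obstacle'' is not something you can source from the classical Koll\'ar or Solern\'o bounds in the form you need; the exponent $D(6D-3)^{N-1}$ for the \L ojasiewicz inequality of a polynomial \emph{equality} system of degree $\le D$ in $N$ variables on a compact set is itself one of the results established in \cite{LMP2013} (building on Kurdyka--Spodzieja type arguments). In other words, your reduction is right, but the black box you need at the end is essentially the content of the very reference the paper cites. If you want a self-contained write-up, you would have to reproduce that quantitative \L ojasiewicz estimate as well; otherwise your sketch is a faithful unpacking of what lies behind the citation.
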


\begin{proof} 
See \cite[Corollary 3.8]{LMP2013}.
\end{proof}

Now, we are in position to finish the proof of Theorem \ref{HolderTypeTheorem}.

\begin{proof} [Proof of Theorem \ref{HolderTypeTheorem}]
By Lemma \ref{Lemma8}, the continuous semi-algebraic function $f \colon \mathbb{R}^n \rightarrow \mathbb{R}, x \mapsto f(x) := \max_{i = 1, \ldots, p} f_i(x),$ is good at infinity. Then
the proof follows on the same lines as that of Theorem \ref{GoodnessTheorem}, by using 
the inequality (\ref{HolderExponent}) in Lemma \ref{alpha} instead of the inequality (\ref{Good2}). We omit the details.
\end{proof}

\begin{remark}{\rm
Theorem \ref{HolderTypeTheorem} was obtained by H\`a \cite{HaHV2013} for a single polynomial function; however, there was no explicit formula given for computing the exponents $\alpha$ and $\beta.$  Again, the present proof is different from that in the above cited paper.

}\end{remark}

\section{Examples}\label{Examples}

In this section, we give some examples which illustrate Theorem \ref{HolderTypeTheorem}.

Denote the convex hull of a set of points $0, a_1, \ldots ,a_m\in\mathbb{R}^n$ by $\Gamma\{a_1,\ldots ,a_m\}$.
\begin{example}{\rm
 Consider the following polynomial map
$$F=(f_1,f_2) \colon \mathbb{R}^2\to\mathbb{R}^2, \quad (x,y)\mapsto (x+y,x^2+y^2-1).$$
Note that $f_1,f_2$ are convenient and $S$ is the half-disk $\{x+y\le 0,x^2+y^2\le 1\}$. The Newton polyhedra at infinity of $f_1$ and $f_2$, are the triangles $\Gamma(f_1)=\Gamma\{(1,0),(0,1)\}$ and $\Gamma(f_2)=\Gamma\{(2,0),(0,2),(0,0)\}$, respectively. The Minkowski sum $$\Gamma(F)=\Gamma(f_1)+\Gamma(f_2)=\Gamma\{(3,0),(0,3),(0,0)\}$$ is again a triangle. Then 
$\Gamma_\infty(F)$ has three faces which are $\Delta^1:=\{(3,0)\}=\{(1,0)\}+\{(2,0)\},\Delta^2:=\{(0,3)\}=\{(0,1)\}+\{(0,2)\},$ and $\Delta^3:=\Gamma\{(3,0),(0,3)\}=\Gamma\{(1,0),(0,1)\}+\Gamma\{(2,0),(0,2)\}$. So we have $f_{\Delta^1}=(x,x^2)$, $f_{\Delta^2}=(y,y^2),$ and $f_{\Delta^3}=(x+y,x^2+y^2)$. It is clear that the following corresponding matrices 
$$M_{\Delta^1} :=
\begin{pmatrix}
x     & 0 & x & 0 \\
2 x^2 & 0 & 0 & x^2 \\
\end{pmatrix}, \
M_{\Delta^2} :=
\begin{pmatrix}
0 & y     & y & 0 \\
0 & 2 y^2 & 0 & y^2 \\
\end{pmatrix}, \
M_{\Delta^3} :=
\begin{pmatrix}
x     & y     & x+y & 0 \\
2 x^2 & 2 y^2 & 0   & x^2+y^2 \\
\end{pmatrix}
$$
have rank $2$ on $(\mathbb{R}\setminus\{0\})^2$. Hence $F$ is non-degenerate at infinity. The reader may easily check the following H\"{o}lder-type global error bound
$$c d(x,S) \le [f(x)]_+^{\frac{1}{2}} + [f(x)]_+ \quad \textrm{ for all } \quad x \in \mathbb{R}^n,$$
and for some $c>0$. The exponent $\alpha=\frac{1}{2}$ here can be also obtained form \cite[Corollary 16.14]{Luo2000-1} by restricting on the case of systems of one linear and one convex quadratic inequality.
}\end{example}

In general, it is not easy to verify directly whenever a system of polynomials has H\"{o}lder-type global error bounds or not. However, it can be done by checking the condition of non-degeneracy at infinity.

\begin{example}{\rm
Let
$$F=(f_1,f_2) \colon \mathbb{R}^3\to\mathbb{R}^2, \quad (x,y,z)\mapsto (x^2+y^2+z^2,x+y+z^3).$$
We have
\begin{eqnarray*}
\Gamma(f_1) &=& \Gamma\{(2,0,0),(0,2,0),(0,0,2)\}, \\
\Gamma(f_2) &=& \Gamma\{(1,0,0),(0,1,0),(0,0,3)\}.
\end{eqnarray*}
Then $f_1, f_2$ are convenient and 
$$\Gamma(F)=\Gamma\{(3,0,0),(0,3,0),(0,0,5),(2,0,3),(0,2,3)\}.$$
Hence
$$\Gamma_\infty(F)=\{\Delta^1, \ldots,\Delta^{13}\},$$
with

\begin{enumerate}
\item[(i)] $\Delta^1=\Gamma\{(3,0,0),(0,3,0),(2,0,3),(0,2,3)\}
= \Gamma\{(2,0,0),(0,2,0)\}+\Gamma\{(1,0,0),(0,1,0),(0,0,3)\},$

\item[(ii)] $\Delta^2=\Gamma\{(2,0,3),(0,2,3),(0,0,5)\}=\Gamma\{(2,0,0),(0,2,0),(0,0,2)\}+\{(0,0,3)\},$

\item[(iii)] $\Delta^3=\Gamma\{(3,0,0),(0,3,0)\}=\Gamma\{(2,0,0),(0,2,0)\}+\Gamma\{(1,0,0),(0,1,0)\},$

\item[(iv)] $\Delta^4=\Gamma\{(2,0,3),(0,2,3)\}=\Gamma\{(2,0,0),(0,2,0)\}+\{(0,0,3)\},$

\item[(v)] $\Delta^5=\Gamma\{(2,0,3),(0,0,5)\}=\Gamma\{(2,0,0),(0,0,2)\}+\{(0,0,3)\},$

\item[(vi)] $\Delta^6=\Gamma\{(3,0,0),(2,0,3)\}=\{(2,0,0)\}+\Gamma\{(1,0,0),(0,0,3)\},$

\item[(vii)] $\Delta^7=\Gamma\{(0,2,3),(0,0,5)\}=\Gamma\{(0,2,0),(0,0,2)\}+\{(0,0,3)\},$

\item[(viii)] $\Delta^8=\Gamma\{(0,3,0),(0,2,3)\}=\{(0,2,0)\}+\Gamma\{(0,1,0),(0,0,3)\},$

\item[(ix)] $\Delta^9=\{(3,0,0)\}=\{(2,0,0)\}+\{(1,0,0)\},$

\item[(x)] $\Delta^{10}=\{(0,3,0)\}=\{(0,2,0)\}+\{(0,1,0)\},$

\item[(xi)] $\Delta^{11}=\{(2,0,3)\}=\{(2,0,0)\}+\{(0,0,3)\},$

\item[(xii)] $\Delta^{12}=\{(0,2,3)\}=\{(0,2,0)\}+\{(0,0,3)\},$

\item[(xiii)] $\Delta^{13}=\{(0,0,5)\}=\{(0,0,2)\}+\{(0,0,3)\},$
\end{enumerate}
By computation, it is not hard to show that the corresponding matrices $M_{\Delta^j}$ have rank $2$ on $(\mathbb{R}\setminus\{0\})^3$ for $j=1, \ldots ,13$. Hence $F$ is non-degenerate at infinity. By Theorem \ref{HolderTypeTheorem}, $F$ has a H\"{o}lder-type global error bound with the exponents $\alpha =\displaystyle\frac{2}{2d(12d - 3)^{n + p - 1}} =\frac{1}{ 3 \times 33^4}$ and $\beta = 1.$
}\end{example}

\begin{example}{\rm
Let
$$F=(f_1,f_2):\mathbb{R}^2\to\mathbb{R}^2,(x,y)\mapsto (x^2-y^2,x-y).$$
It is clear that
\begin{eqnarray*}
\Gamma(f_1) &=& \Gamma\{(2,0),(0,2\}, \\
\Gamma(f_2) &=&\Gamma\{(1,0),(0,1)\}, \\
\Gamma(F) &=& \Gamma(f_1) + \Gamma(f_2) = \Gamma\{(3,0),(0,3)\}.
\end{eqnarray*}

Consider the edge $\Delta := \Gamma\{(3,0),(0,3)\}=\Gamma\{(2,0),(0,2)\}+\Gamma\{(1,0),(0,1)\}$ of $\Gamma_\infty(F)$, then $F_\Delta=F$ and 
$$M_\Delta=
\begin{pmatrix}
2 x^2 & -2 y^2 & x^2-y^2 &  0\\
x     & -y     & 0       & x-y\\
\end{pmatrix}.
$$
It is clear that $\mathrm{rank}M_\Delta= 1$ when $x=y\not=0$, so the condition of non-degeneracy at infinity is not satisfied. However, by a small perturbation $F_\epsilon=F+(0,\epsilon x)=(x^2-y^2,x-y+\epsilon x)$, for any non-zero small $\epsilon$, we still have $\Gamma_\infty(F_\epsilon)=\Gamma_\infty(F)$ and the reader may check that the new polynomial map $F_\epsilon$ is non-degenerate at infinity. The exponents of the H\"{o}lder-type global error bound of $F_\epsilon$ will be $\alpha =\displaystyle\frac{2}{2d(12d - 3)^{n + p - 1}} =\frac{1}{ 2 \times 21^3}$ and $\beta = 1.$
}\end{example}

\section{Applications}\label{Applications}

In  this  section, we  describe various  applications of the  error bound results  obtained in the  previous  sections. 

\subsection{$0-1$ integer feasibility problem}

Let $F \colon \Bbb R^n\rightarrow\Bbb R^p$ and $g \colon \Bbb R^n\rightarrow\Bbb R$ be polynomial maps. Consider the following $0-1$ integer feasibility problem (see \cite{Luo1994-2}):
$$F(x)\le 0, \quad g(x)=0,\quad x_i=0 \ \textrm{ or } \ 1, \ i=1,\ldots,n.$$
Equivalently, we may consider the following system:
$$F(x)\le 0, \quad g(x)=0,\quad x_i(x_i-1) = 0 , \ i=1,\ldots,n.$$
Assume that the solution set $S$ of the problem is not empty, and set 
$$r(x):= \|[f(x)]_+\| + |g(x)|+\displaystyle\sum_{i=1}^n|x_i(x_i-1)|.$$
It is clear that $r(x)$ is a nonnegative, continuous semi-algebraic function. Moreover, $r(x)$ is proper, i.e., $r(x^k)\to\infty$ for any sequence $x^k\to\infty$. Hence $r(x)$ satisfies the Palais-Smale condition at any level $t\ge 0$. By Theorem \ref{TheoremPalais-Smale-Global-Holderian}, there exist some constants $c > 0, \alpha > 0,$ and $\beta > 0$ such that for all $x \in \mathbb{R}^n$, we have
$$c d(x,S) \le [r(x)]^{\alpha} + [r(x)]^{\beta}.$$
In fact, this H\"older-type global error bound still holds if $F$ and $g$ are analytic mapping so this generalizes Theorem 5.6 in \cite{Luo1994-2}, which gives a H\"older-type error bound in the compact setting.

\subsection{Partition problem}

The partition problem asks whether an integer sequence $a_1 , \ldots, a_n$ can be partitioned, i.e., whether there exists $x \in \{\pm 1\}^n$ such that $\sum_{j = 1}^n a_j x_j = 0.$ This problem is known to be NP-complete (see \cite{Garey1979}). If  the infimum $f^*$ of the polynomial $f := (\sum_{j = 1}^n a_jx_j)^2 + \sum_{j = 1}^n(x_j^2-1)^2$ on ${\Bbb R}^n$ is equal to $0,$ a global minimizer is $\pm 1$-valued and thus provides a partition of the sequence.

We leave the reader verifying that $f$ is convenience and non-degenerate at infinity. It follows from Theorem~1.1 in \cite{Dinh2013} that $f$ attains its infimum on $\mathbb{R}^n.$ We have $d=4$ and $p=1$, so $\mathcal{H}(2d, n, p) = \mathcal{H}(8, n, 1) = 8(45)^n.$ Assume that $f^* = 0;$ then $S=\{x\in\Bbb R^n : \ f(x)\le 0\}=\{x\in\Bbb R^n : \ f(x)=0\}\not=\emptyset.$ By Theorem \ref{HolderTypeTheorem}, the following H\"older-type global error bound holds
$$c d(x,S) \le [f(x)]^{\frac{1}{8(45)^n}} + f(x) \quad \textrm{ for all } \quad x \in \mathbb{R}^n.$$

\subsection{Growth rate of the objective function in a polynomial optimization program}

Let $f_0$ and $f_1,\ldots,f_p:\Bbb{R}^n\to\Bbb{R}$ be polynomial functions in $n$ real variables. Set 
$$S:=\{x\in\Bbb{R}^n:\ f_1(x)\le 0,\ldots,f_1(x)\le 0\}.$$
Assume that $S$ is non empty. Let us consider the following constrained optimization problem
$$f^*:=\inf f_0(x) \ \text{ such that }\ x\in S$$
of minimizing $f_0$ over $S$. Under the condition of convenience and non-degeneracy at infinity, we have proved in \cite{Dinh2013} that if $f_0$ is bounded from below on $S$, then $f_0$ attains its infimum on $S$. Hence Theorem \ref{HolderTypeTheorem} can be applied to the solution set of this nonlinear polynomial program to obtain a growth property of the objective function.
\begin{corollary} Assume that $f_0$ is bounded from below on $S$ and that the map $(f_0,f_1,\ldots,f_p)$ is convenient and non-degenerate at infinity, then $f_0$ attains its infimum on $S$. Let 
$$A:=\{x\in S: f_0(x)=f^*\}\not=\emptyset$$
be the set of globally optimal solutions of $f$ on $S.$ Then there exists a constant $c>0$ such that 
$$c d(x, A) \le [f_0(x) - f^*]^\frac{2}{\mathcal{H}(2d, n, p)} + [f_0(x)- f^*]  \quad \textrm{ for all } \quad  x\in S.$$
\end{corollary}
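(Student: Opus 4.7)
The plan is to realize $A$ as the solution set of an enlarged polynomial inequality system and then quote Theorem \ref{HolderTypeTheorem} for that system. The idea is that adjoining the constraint $f_0(x) - f^* \le 0$ to the defining inequalities of $S$ cuts out exactly the set of global minimizers, and subtracting the constant $f^*$ from $f_0$ has no effect on the combinatorial data that enter into the hypotheses of Theorem \ref{HolderTypeTheorem}.

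First I would invoke the Frank--Wolfe type theorem of \cite{Dinh2013} to conclude that $f_0$ attains its infimum $f^*$ on $S$ under the convenience and non-degeneracy at infinity assumptions on $(f_0, f_1, \ldots, f_p)$, so $A \neq \emptyset$ and $f^*$ is finite. Next I would record the identity
$$A = \{x \in \mathbb{R}^n \ : \ f_0(x) - f^* \le 0, \ f_1(x) \le 0, \ldots, f_p(x) \le 0\},$$
which is immediate: for $x \in S$ one has $f_0(x) \ge f^*$, and equality holds precisely on $A$.

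The only step with any substance is verifying that the enlarged polynomial map $\widetilde F := (f_0 - f^*, f_1, \ldots, f_p)$ remains convenient and non-degenerate at infinity. This follows because subtracting the constant $f^*$ modifies only the coefficient at the exponent $0 \in \mathbb{R}^n$, which is already in every Newton polyhedron by definition; therefore $\Gamma(f_0 - f^*) = \Gamma(f_0)$, the Minkowski sum $\Gamma(\widetilde F)$ equals $\Gamma(f_0, f_1, \ldots, f_p)$, and for every face $\Delta \in \Gamma_\infty(f_0)$ we have $(f_0 - f^*)_\Delta = (f_0)_\Delta$ since $0 \notin \Delta$. Hence all the matrices $M_\Delta$ appearing in the non-degeneracy condition are literally the same, and the hypothesis transfers from $(f_0, f_1, \ldots, f_p)$ to $\widetilde F$ without change.

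Finally, I would apply Theorem \ref{HolderTypeTheorem} to $\widetilde F$ with $\widetilde f(x) := \max\{f_0(x) - f^*, f_1(x), \ldots, f_p(x)\}$, obtaining a constant $c > 0$ such that
$$c\, d(x, A) \le [\widetilde f(x)]_+^{\frac{2}{\mathcal{H}(2d, n, p)}} + [\widetilde f(x)]_+ \quad \textrm{for all } x \in \mathbb{R}^n.$$
For $x \in S$ we have $f_i(x) \le 0 \le f_0(x) - f^*$, so $\widetilde f(x) = f_0(x) - f^*$ and $[\widetilde f(x)]_+ = f_0(x) - f^*$, which yields exactly the asserted bound on $S$. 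There is no serious obstacle in this argument; the main (and essentially only) point to watch is the invariance of the combinatorial hypotheses under the shift $f_0 \mapsto f_0 - f^*$, which makes the reduction to Theorem \ref{HolderTypeTheorem} legitimate.
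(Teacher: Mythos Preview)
Your approach is precisely what the paper intends: it gives no proof beyond the remark that ``Theorem~\ref{HolderTypeTheorem} can be applied to the solution set of this nonlinear polynomial program,'' and your reduction via $\widetilde F=(f_0-f^*,f_1,\ldots,f_p)$, together with the observation that the shift by $f^*$ leaves the Newton polyhedra and the principal parts at infinity unchanged, is exactly how one carries this out. One small bookkeeping point worth flagging: since $\widetilde F$ has $p+1$ components, Theorem~\ref{HolderTypeTheorem} literally yields the exponent $\tfrac{2}{\mathcal H(2d,n,p+1)}$ (and requires $p+1\le n$), not $\tfrac{2}{\mathcal H(2d,n,p)}$; the corollary as stated appears to carry a minor indexing slip which your write-up inherits.
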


\subsection{Global H\"olderian stability for set-valued maps}
Let $F=(f_1,\ldots,f_p) \colon \Bbb{R}^n\to\Bbb{R}^p$ be a polynomial map. We define the set-valued map 
$S \colon \Bbb{R}^p \rightrightarrows \Bbb{R}^n$ by 
$$S(y):=\{x\in\Bbb{R}^n:\ f_i(x)-y_i\le 0, \ i=1,\ldots, p\} \quad \textrm{ for } y := (y_1, \ldots, y_p).$$
Then we have the following global H\"olderian property of the set-valued map $S.$

\begin{corollary} Assume that $F$ is convenient and non-degenerate at infinity. Then there exists a positive constant $c$ such that
$$S(y)\subseteq  S(0)+c\, (\|y\|^\frac{2}{\mathcal{H}(2d, n, p)}+\|y\|)\Bbb{B} \quad \textrm{ for all } \quad y \in \mathbb{R}^p,$$
where $\Bbb{B}$ denotes the closed unit Euclidean ball centered at the origin in $\Bbb{R}^n.$
\end{corollary}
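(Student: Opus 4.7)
The plan is to reduce the claimed set-valued inclusion to a single application of the pointwise H\"older-type error bound in Theorem~\ref{HolderTypeTheorem}. The key observation I will exploit is that, for any point feasible for the perturbed system $F(x) \le y$, the value of the \emph{unperturbed} residual $[f(x)]_+$ is automatically controlled by $\|y\|$.

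First I would fix an arbitrary $y \in \mathbb{R}^p$ and an arbitrary $x \in S(y)$. The defining inequalities $f_i(x) \le y_i$ combined with $y_i \le |y_i| \le \|y\|$ give $f_i(x) \le \|y\|$ for every $i$, and hence
$$[f(x)]_+ \;=\; \max\{0,\,\max_i f_i(x)\} \;\le\; \|y\|.$$

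Next, since $F$ is convenient and non-degenerate at infinity (and assuming $S(0) \ne \emptyset$, otherwise the conclusion is vacuous under the convention $\emptyset + r\mathbb{B} = \emptyset$), Theorem~\ref{HolderTypeTheorem} applied to the unperturbed feasible set $S(0)$ supplies a constant $c_0 > 0$ with
$$c_0\, d(x, S(0)) \;\le\; [f(x)]_+^{2/\mathcal{H}(2d,n,p)} + [f(x)]_+ \qquad \textrm{for all } x \in \mathbb{R}^n.$$
Since the scalar function $t \mapsto t^{\alpha} + t$ is non-decreasing on $[0,+\infty)$ for every $\alpha > 0$, substituting the bound from the previous step into the right-hand side yields
$$d(x, S(0)) \;\le\; \frac{1}{c_0}\Bigl( \|y\|^{2/\mathcal{H}(2d,n,p)} + \|y\| \Bigr),$$
which is precisely the claimed inclusion with $c := 1/c_0$, since $x \in S(y)$ was arbitrary.

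I do not expect any genuine obstacle: the deep analytic content already resides in Theorem~\ref{HolderTypeTheorem}, and this corollary is a routine transfer from a pointwise error bound to a set-valued Hausdorff-type stability estimate via the trivial residual bound $[f(x)]_+ \le \|y\|$. The only minor subtlety worth flagging is the vacuous case $S(0) = \emptyset$, which could be handled either by imposing it as an implicit hypothesis or by interpreting the Minkowski sum with the empty set accordingly.
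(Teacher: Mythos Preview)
Your proposal is correct and follows essentially the same route as the paper's own proof: apply Theorem~\ref{HolderTypeTheorem} to $S(0)$, bound $[f(x)]_+ \le \|y\|$ for any $x \in S(y)$, and substitute. Your explicit mention of the monotonicity of $t \mapsto t^\alpha + t$ and of the implicit hypothesis $S(0)\ne\emptyset$ are minor clarifications the paper leaves tacit, but otherwise the arguments coincide.
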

\begin{proof} 
Let us define the function $f \colon \mathbb{R}^n \rightarrow \mathbb{R}$ by $f(x) := \max_{i = 1, \ldots, p} f_i(x).$ 
In view of Theorem~\ref{HolderTypeTheorem}, there exists a constant $c' > 0$ such that
\begin{eqnarray*}
c'd(x,S(0)) &\le & 
[f(x)]_+^\frac{2}{\mathcal{H}(2d, n, p)} + [f(x)]_+ \quad \textrm{ for all } \quad x \in \mathbb{R}^n.
\end{eqnarray*}

Let $y := (y_1, \ldots, y_p)$ be arbitrary in $\Bbb{R}^p.$ It suffices to show that
\begin{eqnarray*}
c'd(x,S(0)) & \le & \|y\|^\frac{2}{\mathcal{H}(2d, n, p)} + \|y\| \quad \textrm{ for all } \quad  x \in S(y).
\end{eqnarray*}
In fact, take any $x \in S(y).$ Then 
$$[f(x)]_+ = \max \{f(x), 0\} \le \max\{y_1, \ldots, y_p, 0\} \le \|y\|.$$
Therefore
\begin{eqnarray*}
c'd(x,S(0)) &\le & 
[f(x)]_+^\frac{2}{\mathcal{H}(2d, n, p)} + [f(x)]_+ \ \le \  \|y\|^\frac{2}{\mathcal{H}(2d, n, p)} + \|y\|,
\end{eqnarray*}
which completes the proof.
\end{proof}

\begin{acknowledgements}
This research was performed while the authors had been visiting at Vietnam Institute for Advanced Study in Mathematics (VIASM). The authors would like to thank the Institute for hospitality and support.

\thanks{$^\dagger$These authors were partially supported by Vietnam National Foundation for Science and Technology Development (NAFOSTED) grant 101.04-2014.23 and the Vietnam Academy of Science and Technology (VAST)}

\thanks{$^\ddagger$This author was partially supported by Vietnam National Foundation for Science and Technology Development (NAFOSTED) grant 101.04-2013.07}

\end{acknowledgements}

\end{document}